
\documentclass{article}
\usepackage{amsfonts}

\usepackage{amsmath,amssymb}

\setcounter{MaxMatrixCols}{10}

\newcommand{\R}{{\mathbb R}}

\newcommand{\bc}{\begin{center}}
\newcommand{\ec}{\end{center}}

\newcommand{\qed}{\enspace\vrule  height6pt  width4pt  depth2pt}
\newenvironment{proof}{\par\noindent{\bf Proof.}}{$\qed$\par\bigskip}
\newtheorem{theorem}{Theorem}[section]
\newtheorem{definition}[theorem]{Definition}
\newtheorem{lemma}[theorem]{Lemma}
\newtheorem{corollary}[theorem]{Corollary}
\newtheorem{proposition}[theorem]{Proposition}
\newtheorem{remark}[theorem]{Remark}
\newtheorem{example}[theorem]{Example}

\begin{document}

\date{}
\title{ Differential Calculus and Integration of Generalized Functions over
Membranes\thanks{
2000 Mathematics Subject Classification: Primary 46F30 Secondary 46T20%
\newline
Keywords and phrases: Colombeau algebra, generalized function, differential
calculus, membranes, transport equation, Cauchy formula.}}
\author{Aragona, J., \and Fernandez, R., \and Juriaans, S.O., \and %
Oberguggenberger, M.}
\maketitle

\begin{abstract}
In this paper we continue the development of the differential calculus
started in \cite{afj}. Guided by the topology introduced in \cite{afj2} and 
\cite{afj1} we introduce the notion of membranes and extend the definition
of integrals, given in \cite{afj}, to integral defined on membranes. We use
this to prove a generalized version of the Cauchy formula and to obtain the
Goursat Theorem for generalized holomorphic functions. We also show that the
generalized transport equation can be solved giving an explicit solution.
\end{abstract}

\section{Introduction}

The theory of Colombeau generalized functions developed rapidly during the
last years. It has useful applications and gives new inside where the
classical theory does not (see \cite{mog}).

Having the algebraic theory (\cite{AJ}) as a starting point,
Aragona-Fernandez-Juriaans have developed a differential calculus which
allows to introduce most notions of differential calculus and geometry into
this context. Using the algebraic and differential theories,
Aragona-Fernandez-Juriaans (\cite{afj}) were able to generalize a result of 
\cite{AS} on the existence of solutions for linear PDE's. In \cite{asj}
these algebraic and differential theories were also used to study algebraic
properties of the algebra of Colombeau generalized functions.

In this paper we continue the development of the calculus started in \cite%
{afj}. After defining what we mean by an $n$-dimensional membrane we define
the integral of a generalized function over a membrane. We then proceed to
apply these notions and results. Among these applications are the Cauchy
Formula in the context of generalized holomorphic functions and examples
which show that for some linear operators the equation $L(u)=f$ can be
solved explicitly.

Basic references for the theory of Colombeau generalized numbers, functions
and their topologies are \cite{AB}, \cite{JC2}, \cite{GKOS}, \cite{gksv}, 
\cite{ku}, \cite{ko}, \cite{opd} and \cite{S2}.


\section{ Differential Calculus}

Let $\Omega\subset {\mathbb{R}}^n$ be an open subset, $\mathbf{I} =]0,1]$
and $\mathbf{I}_{\eta}:=]\,0,\eta[$ for each $\eta\in\mathbf{I}$. As usual, $%
{\mathbb{K}}$ denotes indistinctly ${\mathbb{R}}$ or ${\mathbb{C}}$. The
definitions of the algebra of the simplified generalized functions, $%
\mathcal{G}(\Omega)$, and the ring of the simplified generalized numbers, ${%
\overline{{\mathbb{K}}}}$, are the ones given in \cite{AJ}.

In this section we continue the theory developed in \cite{afj} and we shall
use results and notation from \cite{afj} and \cite{asj}. We remind that $%
\kappa : {\mathcal{G}}(\Omega ) \rightarrow \mathcal{C}^{\infty}({\tilde{%
\Omega}}_c , {\overline{{\mathbb{K}}}} )$ is the embedding introduced in 
\cite{afj} and that the function 
\begin{equation*}
({\mathcal{G}}(\Omega ))^p\ni (f_1,...,f_p)\longmapsto (\kappa
f_1,...,\kappa f_p)\in \mathcal{C}^{\infty}({\tilde{\Omega}}_c , {\overline{{%
\mathbb{K}}}} )^p
\end{equation*}
will also be denoted by $\kappa$.

The results here presented can be proved using the same arguments of their
classical analog observing that 
\begin{equation*}
\lim\limits_{x\to x_0}\displaystyle\,\frac{r(x)}{\alpha_{-\log ||x-x_0||}}%
=0\Longleftrightarrow\lim\limits_{x\to x_0}\displaystyle\,\frac{||r(x)||}{%
||x-x_0||}=0,
\end{equation*}
where $r:A\to{\overline{{\mathbb{R}}}}^{\,s}$ is a function, $A$ is an open
subset of ${\overline{{\mathbb{R}}}}^{\,m}$ and $x_0\in A$.

\begin{theorem}[Chain Rule]
Let $\,U$ be an open subset of $\,{\overline{{\mathbb{R}}}}^{\,m}\,$, $\,V$
an open subset of $\,{\overline{{\mathbb{R}}}}^{\,k}$, $\,f:U\to V$ a
function differentiable at $x_0\in U$ and $g:V\to{\overline{{\mathbb{R}}}}%
^{\,s}$ differentiable at $f(x_0)$. Then $g\circ f$ is differentiable at $x_0
$ and $D(g\circ f)(x_0)=Dg(f(x_0))\circ Df(x_0)$.
\end{theorem}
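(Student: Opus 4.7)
The plan is to mimic the classical proof of the chain rule verbatim, exploiting the equivalence of the two notions of $o$-smallness displayed just before the theorem to convert between the generalized-topology formulation and estimates phrased in the ordinary norm $\|\cdot\|$ on $\overline{\mathbb{R}}^m$.

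First, I would unpack differentiability. By hypothesis,
\begin{equation*}
f(x_0+h) = f(x_0) + Df(x_0)\,h + r_1(h), \qquad
\lim_{h\to 0}\frac{\|r_1(h)\|}{\|h\|}=0,
\end{equation*}
and, writing $y_0:=f(x_0)$,
\begin{equation*}
g(y_0+k) = g(y_0) + Dg(y_0)\,k + r_2(k), \qquad
\lim_{k\to 0}\frac{\|r_2(k)\|}{\|k\|}=0,
\end{equation*}
where the equivalence at the start of the section lets us replace the $\alpha_{-\log\|\cdot\|}$-formulation by these standard ratios. Setting $k := f(x_0+h)-f(x_0) = Df(x_0)h + r_1(h)$ and substituting, the identity
\begin{equation*}
(g\circ f)(x_0+h) - (g\circ f)(x_0) - Dg(y_0)\,Df(x_0)\,h = Dg(y_0)\,r_1(h) + r_2(k)
\end{equation*}
drops out, and everything reduces to showing that the right-hand side is $o(\|h\|)$.

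The two terms are handled separately. For the first, I would use that $Dg(y_0)$ is a generalized linear map, so $\|Dg(y_0)\,r_1(h)\| \le \|Dg(y_0)\|\cdot\|r_1(h)\|$, and the factor $\|r_1(h)\|/\|h\|$ tends to zero. For the second, I first bound
\begin{equation*}
\|k\| \le \bigl(\|Df(x_0)\| + 1\bigr)\|h\|
\end{equation*}
for $h$ sufficiently small (using $\|r_1(h)\|\le\|h\|$ eventually), so in particular $k\to 0$ as $h\to 0$; then $\|r_2(k)\|/\|h\| \le (\|Df(x_0)\|+1)\cdot\|r_2(k)\|/\|k\|\to 0$.

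The main obstacle, and the only place where the generalized setting requires more than rote transcription, lies in the last estimate: the operator norms $\|Df(x_0)\|$ and $\|Dg(y_0)\|$ are themselves elements of $\overline{\mathbb{R}}$, so one has to confirm that they are well-defined generalized numbers and that multiplying a null-ratio by such a constant again yields a null-ratio in the sharp topology on $\overline{\mathbb{R}}$. Once this point is settled — which amounts to verifying that the differential is a continuous generalized-linear map in the sense of the topology from \cite{afj2} and \cite{afj1} — the rest of the argument is a direct translation of the classical proof, justified by the stated equivalence.
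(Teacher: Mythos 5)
Your proof follows exactly the route the paper intends: the authors omit the proof entirely, stating only that the results ``can be proved using the same arguments of their classical analog'' via the displayed equivalence between the $\alpha_{-\log\|\cdot\|}$ formulation and the standard ratio $\|r(x)\|/\|x-x_0\|\to 0$, which is precisely your strategy. The one ``obstacle'' you flag at the end is in fact harmless: the sharp norm $\|\cdot\|$ on $\overline{\mathbb{R}}^{\,m}$ is real-valued (it is the generalized absolute value $|\cdot|$ that lives in $\overline{\mathbb{R}}$) and submultiplicative, so $\|Df(x_0)\|$ and $\|Dg(f(x_0))\|$ are ordinary nonnegative reals and your final estimates are literally the classical ones.
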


\begin{theorem}
Let $\,U\,$ and $\,V\,$ be open subsets of $\,{\overline{{\mathbb{R}}}}^{\,n}
$, $\,f:U\rightarrow V$ a function with inverse $g:V\rightarrow U$. If $\,f\,
$ is differentiable at $\,x_0\in U$, $\ det(Df(x_0))\in Inv({\overline{{%
\mathbb{R}}}} )$ and $\,g\,$ is continuous in $\,y_0:=f(x_0)\,$, then $\,g\,$
is differentiable at $\,y_0\,$.
\end{theorem}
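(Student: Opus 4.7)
The plan is to imitate the classical inverse-function-theorem proof for differentiability of the inverse, reducing everything to the remainder estimate that is already encoded in the equivalence stated just before the theorem. Set $T := Df(x_0)$ and note that, because $\det(T) \in Inv(\overline{\mathbb{R}})$, the linear map $T : \overline{\mathbb{R}}^n \to \overline{\mathbb{R}}^n$ admits a linear inverse via the Cramer/cofactor formula, so $T^{-1}$ is a well-defined $\overline{\mathbb{R}}$-linear endomorphism. The differentiability of $f$ at $x_0$ then reads
\[
f(x) - f(x_0) = T(x - x_0) + r(x),
\]
where, thanks to the equivalence recalled just above, one may work with the familiar quotient form $\|r(x)\|/\|x - x_0\| \to 0$ as $x \to x_0$.

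For $y \in V$ near $y_0$ I would set $x := g(y) \in U$. Continuity of $g$ at $y_0$ forces $x \to x_0$ as $y \to y_0$. Substituting $y = f(x)$ in the identity above and applying $T^{-1}$ rearranges into
\[
g(y) - g(y_0) - T^{-1}(y - y_0) = -T^{-1}\bigl(r(g(y))\bigr),
\]
so the conclusion $Dg(y_0) = T^{-1}$ will follow once $\|T^{-1}(r(g(y)))\|/\|y - y_0\|$ is shown to tend to $0$ in the sharp topology.

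The core step is the comparability estimate $\|g(y) - g(y_0)\| \leq C\,\|y - y_0\|$ for $y$ near $y_0$. From $T(x - x_0) = (y - y_0) - r(x)$ one gets
\[
\|x - x_0\| \leq \|T^{-1}\|\bigl(\|y - y_0\| + \|r(x)\|\bigr),
\]
and the decay of $r$ is used to absorb $\|T^{-1}\|\,\|r(x)\|$ into the left-hand side, yielding $\|x - x_0\| \leq 2\|T^{-1}\|\,\|y - y_0\|$ on a suitable neighbourhood. Combining this with $\|T^{-1}(r(g(y)))\| \leq \|T^{-1}\|\,\|r(g(y))\|$ and $\|r(g(y))\|/\|g(y) - g(y_0)\| \to 0$ delivers the required estimate.

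The main obstacle, and the one place where the Colombeau setting demands extra care compared with the classical argument, is the absorption step: it requires that $\|r(x)\|/\|x - x_0\|$ eventually drop below the fixed threshold $1/(2\|T^{-1}\|)$ with respect to the sharp topology on $\overline{\mathbb{R}}$. For that to be meaningful one needs $\|T^{-1}\|$ to be an invertible element of $\overline{\mathbb{R}}$ rather than merely nonzero, a property that the hypothesis $\det(Df(x_0)) \in Inv(\overline{\mathbb{R}})$ secures through the explicit cofactor formula for $T^{-1}$. Once this invertibility is established, the sharp-topology version of the usual $\varepsilon$-argument carries through verbatim.
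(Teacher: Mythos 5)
Your argument is correct and is precisely the route the paper intends: the authors omit the proof, remarking only that the classical argument carries over once the remainder condition is rewritten in the quotient form $\|r(x)\|/\|x-x_0\|\to 0$, which is exactly the reduction and absorption scheme you carry out. One small clarification on your final paragraph: $\|T^{-1}\|$ is a finite positive \emph{real} number (the sharp norm is real-valued and the entries of $T^{-1}$ are moderate by the cofactor formula), so the threshold $1/(2\|T^{-1}\|)$ is automatically a legitimate real constant; what the hypothesis $\det(Df(x_0))\in Inv({\overline{{\mathbb{R}}}})$ actually buys is the existence of $T^{-1}$ itself, since a merely nonzero determinant need not be invertible in the ring ${\overline{{\mathbb{R}}}}$.
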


We now announce the two most classical theorems of differential calculus.

\begin{theorem}[Inverse Function Theorem]
\label{teoinv} Let $\Omega$ be an open and convex subset of $\,{\mathbb{R}}^n
$, $f\in ({\mathcal{G}}(\Omega ))^n$ and $x_0\in{\tilde{\Omega}}_c$ such
that $\ det(D(\kappa(f))(x_0))\in Inv({\overline{{\mathbb{R}}}} )$. Then
there exist $\,U$ and $\,V$ open subsets of $\,{\overline{{\mathbb{R}}}}^n$
such that $x_0\in U$, $(\kappa(f))(x_0)\in V$ and $\kappa (f):U\to V$ is a $%
\mathcal{C}^{\infty}$-diffeomorphism.
\end{theorem}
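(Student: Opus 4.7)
The plan is to adapt the classical proof of the inverse function theorem to the sharp-topology setting on $\overline{\mathbb{R}}^n$, exploiting the two theorems stated just above (Chain Rule and differentiability of inverses). Throughout, let $A:=D(\kappa(f))(x_0)\in M_n(\overline{\mathbb{R}})$; by hypothesis $\det A\in\mathrm{Inv}(\overline{\mathbb{R}})$, so $A$ is invertible as a matrix over $\overline{\mathbb{R}}$, and the adjugate formula produces $A^{-1}\in M_n(\overline{\mathbb{R}})$ explicitly.

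First I would establish local bijectivity by a contraction-mapping argument. For each $y$ in a small neighbourhood of $y_0:=\kappa(f)(x_0)$, define
\begin{equation*}
T_y(x)\;:=\;x+A^{-1}\bigl(y-\kappa(f)(x)\bigr),
\end{equation*}
whose fixed points are exactly the solutions of $\kappa(f)(x)=y$. Using the differentiability of $\kappa(f)$ at $x_0$ in the form
\begin{equation*}
\kappa(f)(x)-\kappa(f)(x_0)=A(x-x_0)+r(x),\qquad\lim_{x\to x_0}\frac{\|r(x)\|}{\|x-x_0\|}=0,
\end{equation*}
(together with the equivalence relating the rate $\alpha_{-\log\|x-x_0\|}$ to $\|x-x_0\|$ recalled in the excerpt) one shows that on a sufficiently small sharp ball $U_0$ centred at $x_0$ the map $T_{y_0}$ satisfies $\|T_{y_0}(x)-T_{y_0}(x')\|\le \tfrac12\|x-x'\|$. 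A continuity/perturbation estimate then yields, for every $y$ in a sharp neighbourhood $V$ of $y_0$, that $T_y$ maps a closed sharp ball $\overline{B}\subset U_0$ into itself and is a strict contraction there. Completeness of $(\overline{\mathbb{R}}^n,\mathrm{sharp})$ produces a unique fixed point $g(y)\in\overline{B}$, and taking $U:=\kappa(f)^{-1}(V)\cap\mathrm{int}(\overline{B})$ one obtains a bijection $\kappa(f):U\to V$ with inverse $g$.

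Next I would prove that $g$ is continuous on $V$. The usual estimate
$\|g(y_1)-g(y_2)\|\le 2\|A^{-1}\|\,\|y_1-y_2\|$
comes directly from the contraction property and transfers verbatim to the sharp setting, because the norms on $\overline{\mathbb{R}}^n$ are defined coordinatewise in $\overline{\mathbb{R}}$ and the multiplication by the generalized matrix $A^{-1}$ is sharp-continuous. With continuity in hand and the invertibility of $\det A$, Theorem 2.2 applies pointwise at every $y\in V$ and gives differentiability of $g$ with $Dg(y)=\bigl(D(\kappa(f))(g(y))\bigr)^{-1}$. Finally, smoothness of $g$ follows by the usual bootstrap: $Dg$ is the composition of $g$, $D(\kappa(f))$ and matrix inversion (which is smooth on the open set of matrices with invertible determinant, by Cramer's rule), so the Chain Rule (Theorem 2.1) together with $\kappa(f)\in\mathcal{C}^{\infty}(\tilde\Omega_c,\overline{\mathbb{R}}^n)$ yields $g\in\mathcal{C}^{\infty}(V,\overline{\mathbb{R}}^n)$ by induction on the order of differentiation.

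The main obstacle I anticipate is the contraction-mapping step: one must verify that strict contractions on closed sharp balls of $\overline{\mathbb{R}}^n$ do have fixed points, which requires the completeness of the sharp metric established in \cite{afj2},\cite{afj1}, and one must be careful that ``smallness'' of $x-x_0$ in the differentiability hypothesis is read in the sharp sense, so that the remainder $A^{-1}r(x)$ can be absorbed into a contraction factor $<1$ on a genuine sharp-open neighbourhood. Once this is set up, the continuity and smoothness parts follow mechanically from the two theorems already stated.
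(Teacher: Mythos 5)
Your contraction--mapping argument is exactly the classical proof of the inverse function theorem transplanted to the sharp setting, which is precisely the paper's (implicit) approach: the authors omit the proof, asserting that ``the proofs do not differ much from the classical ones'' and that the classical arguments carry over via the stated equivalence between $\lim_{x\to x_0} r(x)/\alpha_{-\log\|x-x_0\|}=0$ and $\lim_{x\to x_0}\|r(x)\|/\|x-x_0\|=0$. The one point to tighten is that the two-point estimate $\|T_{y_0}(x)-T_{y_0}(x')\|\le\tfrac12\|x-x'\|$ cannot follow from differentiability at the single point $x_0$ alone; as in the classical case it requires the sharp continuity of $D(\kappa(f))$ near $x_0$ (available since $\kappa(f)$ is $\mathcal{C}^{\infty}$) together with a mean value inequality for the sharp norm, after which the rest of your argument goes through as written.
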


\begin{theorem}[Implicit Function Theorem]
Let $\Omega$ be an open and convex subset of $\,{\mathbb{R}}^m\times{\mathbb{%
R}}^{k}$, $f\in ({\mathcal{G}}(\Omega ))^k$, $(x_0,y_0)\in{\tilde{\Omega}}_c$
such that $\kappa(f)(x_0,y_0)=0$ and $det(D_y(\kappa(f))(x_0,y_0))\in Inv({%
\overline{{\mathbb{R}}}} )$. Then there exist $\,U\subset{\overline{{\mathbb{%
R}}}}^{\,m}$ and $\,V\subset{\overline{{\mathbb{R}}}}^{\,k}$ with $%
(x_0,y_0)\in U\times V\subset {\tilde{\Omega}}_c$ such that for all $x\in U$
there is a unique $g(x)\in V$ with $(\kappa (f))(x,g(x))=0$. Moreover the
function $g:x\in U\mapsto g(x)\in V$ is $\mathcal{C}^{\infty}$ and $%
Dg(x)=[D_y(\kappa (f))(x,g(x))]^{-1}[-D_x(\kappa (f))(x,g(x)]$, for all $%
x\in U$.  \label{teoimpl}
\end{theorem}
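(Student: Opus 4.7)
The plan is to reduce the Implicit Function Theorem to the Inverse Function Theorem (Theorem~\ref{teoinv}), mimicking the classical argument. The central idea is to introduce the auxiliary map $F\in (\mathcal{G}(\Omega))^{m+k}$ defined componentwise by
\begin{equation*}
F(x,y):=(x,f(x,y)),
\end{equation*}
whose first $m$ components are the canonical projections (embedded from smooth functions) and whose last $k$ components are those of $f$. The function $g$ asserted by the theorem will then be read off from a local inverse of $\kappa(F)$ at $(x_0,y_0)$.

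The first substantive step is to verify the hypotheses of Theorem~\ref{teoinv} for $F$. A block computation yields
\begin{equation*}
D(\kappa(F))(x_0,y_0)=\begin{pmatrix} I_m & 0 \\ D_x(\kappa(f))(x_0,y_0) & D_y(\kappa(f))(x_0,y_0)\end{pmatrix},
\end{equation*}
whose determinant equals $\det(D_y(\kappa(f))(x_0,y_0))$, invertible in $\overline{\mathbb{R}}$ by hypothesis. Theorem~\ref{teoinv} then supplies open sets $U'\ni(x_0,y_0)$ and $V'\ni(x_0,0)$ in $\overline{\mathbb{R}}^{m+k}$ on which $\kappa(F):U'\to V'$ is a $\mathcal{C}^{\infty}$-diffeomorphism. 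Since $\kappa(F)$ preserves the first $m$ coordinates, so does its inverse $H:=(\kappa(F))^{-1}$; write $H(x,z)=(x,h(x,z))$ with $h$ of class $\mathcal{C}^{\infty}$.

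From this the conclusions follow quickly. I would shrink, picking $U\subset\overline{\mathbb{R}}^{m}$ open around $x_0$ with $U\times\{0\}\subset V'$, then define $g(x):=h(x,0)$ and choose $V\subset\overline{\mathbb{R}}^{k}$ open around $y_0$ with $U\times V\subset U'\cap \tilde{\Omega}_c$. By construction $\kappa(F)(x,g(x))=(x,0)$, so $\kappa(f)(x,g(x))=0$. Uniqueness of $g(x)$ in $V$ follows from injectivity of $\kappa(F)$ on $U'$: any $y'\in V$ with $\kappa(f)(x,y')=0$ would satisfy $\kappa(F)(x,y')=(x,0)=\kappa(F)(x,g(x))$, hence $y'=g(x)$. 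For the derivative formula, I would apply the Chain Rule to the identity $\kappa(f)(x,g(x))=0$, obtaining $D_x(\kappa(f))(x,g(x))+D_y(\kappa(f))(x,g(x))\circ Dg(x)=0$, and then invert $D_y(\kappa(f))(x,g(x))$.

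The step demanding the most care is the underlying bookkeeping in the Colombeau setting rather than the algebraic skeleton: one must check that the coordinate projections embed into $\mathcal{G}(\Omega)$ so that $F\in (\mathcal{G}(\Omega))^{m+k}$ and that $\kappa(F)$ really is the classical-looking map written above; that the open sets produced by Theorem~\ref{teoinv} can be chosen inside $\tilde{\Omega}_c$ so that $\kappa(f)$ can be evaluated throughout; and that the invertibility of $D_y(\kappa(f))$ propagates from $(x_0,y_0)$ to all of $U\times V$, which relies on the fact that invertible elements of $\overline{\mathbb{R}}$ form a sharp-open set and that $\det D_y(\kappa(f))$ is $\mathcal{C}^{\infty}$, hence continuous. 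Once these points are in place, the rest is a direct transcription of the classical proof via Theorem~\ref{teoinv} and the Chain Rule.
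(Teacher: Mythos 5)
Your proposal is correct and follows exactly the route the paper intends: the authors omit the proof precisely because it is the classical reduction of the implicit function theorem to the inverse function theorem via the auxiliary map $F(x,y)=(x,f(x,y))$, transferred to the Colombeau setting, which is what you carry out. The Colombeau-specific checks you flag (the projections lie in $\mathcal{G}(\Omega)$, the block determinant is $\det(D_y(\kappa(f))(x_0,y_0))\in Inv(\overline{\mathbb{R}})$, and the neighborhoods can be shrunk inside $\tilde{\Omega}_c$) are the right ones and are handled adequately.
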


Since the proofs do not differ much from the classical ones we omit them all.

\begin{remark}
Let $\Omega:=]-1,1[$ and $f:{\tilde{\Omega}}_c\to {\overline{{\mathbb{R}}}}$
the function defined by $f(x)=\alpha_1\,x$. Then $f=\kappa (F$, where $F=
[(\varepsilon,y)\in ]0,1]\times\Omega\mapsto\varepsilon y]$ has inverse $%
g=f^{-1}$, $g(x)=\alpha_{-1}x$, but there do not exist $V$, an open subset
of ${\mathbb{R}}$, and $h\in \mathcal{G}(V)$ such that $\kappa (h)=f^{-1}$.

The problem here is that there does not exists an open subset $V\subset {%
\mathbb{R}}$ such that $g(\widetilde{V}_c)\subset \widetilde{\Omega}%
_c\subset B^{\prime }_1(0)$. For if this were the case then, for $x_0\in
V\setminus \{0\}$ and $y_0=[(x_0)]$, we would have that $\| g(y_0)\| = e\|
y_0\|=e$ and hence $g(y_0)\notin B^{\prime }_1(0)$. Hence we may conclude
that there does not exists an open subset $V\subset {\mathbb{R}}$ and $h\in {%
\mathcal{G}}(V)$ such that $g=\kappa (h)$. Indeed, if they did exist, then
for the composition of $F$ and $h$ to make sense we must have that $h\in {%
\mathcal{G}}^*(V, ]-1, 1[)$ and hence, $g$ would be defined on $\widetilde{V}%
_c$ and so $Im(g)\subset \widetilde{\Omega}_c$. This proves that $F$ does
not have an inverse as a generalized function.

\typeout{The problem here is the image of $g$.  $Im(g)$ is not contained in
${\tilde{\Omega^{\prime}}}_c$, for any open subset $\Omega^{\prime}\subset
\R^n$, and so composition with another generalized function is not possible.
This is easily seen because for $x\in \Omega^{\prime}$ we have that $\|
g(x)\| = e\| x\|$ which must be less or equal then $1$ since we always have
that ${\tilde{\Omega^{\prime}}}_c \subset B_1(0)$.  But
${\tilde{\Omega^{\prime}}}_c\bigcap S_1(0)\neq \emptyset$ and hence the
composition $f\circ g$ is not possible.}

In the next section we introduce a way in which an inverse of the function
defined above exists without appealing to the inverse defined in ${\overline{%
{\mathbb{K}}}}$.
\end{remark}


\section{Integration on Membranes}

Given $\,x_0\in{\overline{{\mathbb{K}}}}$ and $0<r\in{\mathbb{R}}\,$ we
define $V_r[x_0]:=\{x\in{\overline{{\mathbb{K}}}}\,|\,|x-x_0|\leq\alpha_r\}$%
, where the definition of absolute value was introduced in \cite{asj} and $%
\alpha_r$ was defined in \cite{AJ}. It is proved in \cite{afj2} and \cite%
{afj1} that $\{V_r[x_0]\,|\,0<r\in{\mathbb{R}}\,,\,x_0\in{\overline{{\mathbb{%
K}}}}\}$ is a basis of a topology in ${\overline{{\mathbb{K}}}}$ which
coincides with Scarpalezos' sharp topology. It is easy to verify that $x\in
V_r[x_0]$ if and only if there exist representatives $(x_{\varepsilon})\,,%
\,(x_{0\varepsilon})$ of $x$ and $x_0$ respectively such that $%
x_{\varepsilon}\in B_{\varepsilon^r}(x_{0\varepsilon})$, $\forall
\varepsilon\in \mathbf{I}$. So if $x_0\in {\tilde{\Omega}}_c$, $%
(x_{0\varepsilon})$ is a representative of $x_0$ and $(M_{\varepsilon}):=
(B_{\varepsilon^r}({x_{0\varepsilon}})\cap\Omega)$, then $%
V_r[x_0]=\{[(x_{\varepsilon})]|\ x_{\varepsilon}\in M_{\varepsilon}\}$.
Based on this fact we introduce the notion of membranes which will allow us
to effectively integrate generalized functions.

We first start by defining what the subsets are over which we will be
integrating generalized functions and then define how to integrate over
these sets.

\begin{definition}
\label{membrana} We denote by ${\mathcal{P}}(\Omega )_M$ the family of
subsets $(M_{\varepsilon})$ such that

\begin{enumerate}
\item $\exists K\subset \Omega$ a compact subset and $\eta\in\mathbf{I}$
such that $M_{\varepsilon} \subset K$ for $\varepsilon \in \mathbf{I}_\eta $;

\item the characteristic function of $\,M_{\varepsilon}\,$ is Riemann
integrable for all $\,\varepsilon \in\mathbf{I}_\eta$.
\end{enumerate}
\end{definition}

\noindent \textit{Any element of ${\mathcal{P}}(\Omega )_M$ is called a $n-$%
dimensional pre-membrane in $\Omega$.}

\vspace{0.3cm} \noindent Note that $\ 1.\ $ implies that $%
\{[(x_{\varepsilon})]|\ x_{\varepsilon}\in
M_{\varepsilon}\,,\forall\,\varepsilon\in\mathbf{I}\}\subset {\tilde{\Omega}}%
_c$. \vspace{0.3cm}

\begin{definition}
\label{history} Let $\gamma=(\gamma_{\varepsilon})$ be a family of elements
of $\mathcal{C}^1([0,1],{\mathbb{R}}^n)$. It is called a $n$-dimensional
history or just history if $(\gamma_{\varepsilon}([0,1]))$ is a pre-membrane
and there are $\,N\in{\mathbb{N}}\,,\,c>0$ and $\,\eta\in \mathbf{I}$ such
that $|\gamma_{\varepsilon}^{\,\prime}(t)|\leq
c\varepsilon^{-N}\,,\forall\,\varepsilon\in\mathbf{I}_{\eta}$ and $\,t\in
[0,1]$. The pre-membrane $(\gamma_{\varepsilon}([0,1]))$ is denoted by $%
\gamma^*$.
\end{definition}

\vspace{0.3cm} Two elements $(M_{\varepsilon})$ and $(M^{\prime}_{%
\varepsilon})$ of ${\mathcal{P}}(\Omega )_M$ are said to be equivalent if
there exists a null-function $(\Psi_{\varepsilon})\in {\mathcal{N}}(\Omega;{%
\mathbb{R}}^n)$ such that the function $\phi$ defined on $\mathbf{I}%
\times\Omega$ by ${\phi}(\varepsilon,x):=x+\Psi_{\varepsilon}(x)$ satisfies $%
{\phi}(\varepsilon
,M_{\varepsilon})=M^{\prime}_{\varepsilon}\,,\,\forall\,\varepsilon\in\mathbf%
{I}$. This clearly defines an equivalence relation on ${\mathcal{P}}(\Omega
)_M$. We denote the quotient space by ${\mathcal{P}}(\Omega )_{M}/_{\sim}\,$
and call its elements \textit{$n$-dimensional membranes} in $\Omega$ or just
membranes. It is easy to verify that if $(M_{\varepsilon})$ and $%
(M^{\prime}_{\varepsilon})$ are equivalent and $\gamma=(\gamma_{\varepsilon})
$ is a history such that $\gamma^*=(M_{\varepsilon})$, then $%
(\beta_{\varepsilon})$ is a history and $\beta^*=(M^{\prime}_{\varepsilon})$%
, where $\beta_{\varepsilon}(t):=\phi(\varepsilon,\gamma_{\varepsilon}(t))\,$%
.

\vspace{0.3cm} Note that if $\,(M_{\varepsilon})$ and $\,(M^{\prime}_{%
\varepsilon})$ are equivalent pre-membranes then

\vskip-0.3cm  
\begin{equation*}
\{[(x_{\varepsilon})]\,|\, \ x_{\varepsilon}\in
M_{\varepsilon}\,,\,\forall\varepsilon\in\mathbf{I}\}=\{[(y_{\varepsilon})]%
\,|\, \ y_{\varepsilon}\in M^{\prime}_{\varepsilon}\,,\,\forall\varepsilon\in%
\mathbf{I}\}\,.
\end{equation*}

\noindent  So we can define the function 
\begin{equation*}
j:\mathcal{P}(\Omega)_{M}/_{\sim}\ \ni
[(M_{\varepsilon})]\longmapsto\{[(x_{\varepsilon})]\,|\, \
x_{\varepsilon}\in M_{\varepsilon}\,,\,\forall\varepsilon\in\mathbf{I}%
\}\subset{\tilde{\Omega}}_c\,.
\end{equation*}

\vspace{0.3cm} Since $vol(M_{\varepsilon})$ is uniformly bounded for small $%
\,\varepsilon$, we can define $vol(X)$ by $vol(X):=[\varepsilon \rightarrow
vol(M_{\varepsilon})].$

From here on we shall write $\,X=[(M_{\varepsilon})]$ instead of $%
\,X=j(\,[(M_{\varepsilon})]\,)\,.$ If we drop condition 2. of definition~\ref%
{membrana} we will speak of a pseudo-membrane, i.e., a family of subsets
satisfying only the first condition of definition~\ref{membrana} will be
called a pseudo-membrane. The image by $j$ of a pseudo-membrane shall still
be called a pseudo-membrane.

\begin{lemma}
\label{closed} Let $(M_{\varepsilon} )$ be a pseudo-membrane. Then $%
j([(M_{\varepsilon} )]) = j([({\overline{M_{\varepsilon}}} ))]$, where ${%
\overline{M_{\varepsilon}}}$ is the topological closure of $M_{\varepsilon}$.
\end{lemma}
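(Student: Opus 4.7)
The plan is to prove the two inclusions separately; one direction is essentially tautological, while the other relies on the freedom one has in choosing representatives of generalized points modulo null sequences.

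The inclusion $j([(M_{\varepsilon})])\subseteq j([(\overline{M_{\varepsilon}})])$ is immediate: since $M_{\varepsilon}\subseteq \overline{M_{\varepsilon}}$, any sequence $(x_{\varepsilon})$ satisfying $x_{\varepsilon}\in M_{\varepsilon}$ for all $\varepsilon$ also satisfies $x_{\varepsilon}\in \overline{M_{\varepsilon}}$. One should also note in passing that $(\overline{M_{\varepsilon}})$ really is a pseudo-membrane: by Definition~\ref{membrana}(1) there exist a compact $K\subset\Omega$ and $\eta\in\mathbf{I}$ with $M_{\varepsilon}\subset K$ for $\varepsilon\in\mathbf{I}_{\eta}$, and since $K$ is closed we get $\overline{M_{\varepsilon}}\subset K$ as well.

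For the reverse inclusion, take a point $x\in j([(\overline{M_{\varepsilon}})])$ and choose a representative $(x_{\varepsilon})$ with $x_{\varepsilon}\in \overline{M_{\varepsilon}}$ for every $\varepsilon\in\mathbf{I}$. The idea is to approximate each $x_{\varepsilon}$ by a point of $M_{\varepsilon}$ so well that the error is null. Concretely, for each $\varepsilon\in\mathbf{I}_{\eta}$ we use the definition of topological closure to pick $y_{\varepsilon}\in M_{\varepsilon}$ with
\begin{equation*}
|x_{\varepsilon}-y_{\varepsilon}|<e^{-1/\varepsilon},
\end{equation*}
and for $\varepsilon\in\mathbf{I}\setminus\mathbf{I}_{\eta}$ we pick $y_{\varepsilon}\in M_{\varepsilon}$ arbitrarily (or, if $M_{\varepsilon}$ is empty, any fixed point of $\Omega$, as this range of $\varepsilon$ is irrelevant for the equivalence class).

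Now I verify that $(y_{\varepsilon})$ is a representative of $x$. Moderacy is clear because $y_{\varepsilon}\in K$ for $\varepsilon\in\mathbf{I}_{\eta}$, so $(y_{\varepsilon})$ is uniformly bounded near zero. The difference $(x_{\varepsilon}-y_{\varepsilon})$ satisfies $|x_{\varepsilon}-y_{\varepsilon}|<e^{-1/\varepsilon}$ for small $\varepsilon$, which decays faster than any power of $\varepsilon$, hence $(x_{\varepsilon}-y_{\varepsilon})\in\mathcal{N}$. Therefore $[(y_{\varepsilon})]=[(x_{\varepsilon})]=x$, and since $y_{\varepsilon}\in M_{\varepsilon}$ we conclude $x\in j([(M_{\varepsilon})])$. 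The only mildly delicate point is the handling of $\varepsilon$ outside $\mathbf{I}_{\eta}$, but this is absorbed by the fact that membership in $j([(M_{\varepsilon})])$ only depends on the germ of the sequence at $0$, so any legal choice there suffices.
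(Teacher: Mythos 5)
Your proof is correct and follows essentially the same route as the paper's: the forward inclusion is tautological, and the reverse inclusion is obtained by picking $y_{\varepsilon}\in M_{\varepsilon}$ with $|x_{\varepsilon}-y_{\varepsilon}|<e^{-1/\varepsilon}$ and observing that the difference is a null element. Your extra remarks on moderacy, on $(\overline{M_{\varepsilon}})$ being a pseudo-membrane, and on the irrelevance of $\varepsilon$ outside $\mathbf{I}_{\eta}$ are fine elaborations of what the paper leaves implicit.
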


\begin{proof}
It is enough to prove that $j([({\overline{M_{\varepsilon}}})]) \subset j([({%
M_{\varepsilon}})])$. Choose $x\in j([({\overline{M_{\varepsilon}}})])$;
then $x=[(x_{\varepsilon})]$, with $x_{\varepsilon}\in {\overline{%
M_{\varepsilon}}}$. For each $x_{\varepsilon}$ we may choose $%
y_{\varepsilon}\in M_{\varepsilon}$ such that $|x_{\varepsilon}-y_{%
\varepsilon}|<exp(-\frac{1}{\varepsilon})$. Since $(exp(-\frac{1}{\varepsilon%
}))$ is a null-element we are done.
\end{proof}

If $x,y$ are points in ${\mathbb{K}}^n$ we define the \textit{generalized
distance} between them as $d(x,y)=dist(x,y):=[\varepsilon \rightarrow
dist(x_{\varepsilon},y_{\varepsilon})]$. This is a well defined element of ${%
\overline{{\mathbb{R}}}}$. We recall also that in \cite{AJ} it is proved
that if $w\in {\overline{{\mathbb{K}}}}$ is a non-zero element then there
exists an idempotent $e\in {\overline{{\mathbb{K}}}}$ and $r\in {\mathbb{R}}$
such that $e\cdot |w|>e\cdot \alpha_r$. If $x_0\in {\overline{{\mathbb{K}}}}%
^n$ then $V_r[x_0]=\{x\in {\overline{{\mathbb{K}}}}^n \ |\
d(x,x_0)<\alpha_r\}$. In \cite{afj1} it is proved that these sets are a
basis of neighborhood of the sharp topology of ${\overline{{\mathbb{K}}}}^n$%
. We will use these observations in what follows.

\begin{proposition}
Pseudo-membranes are closed in the sharp topology. Moreover, if $%
M=j([(M_{\varepsilon})])$ and $M_{\varepsilon}$ is convex, for small $%
\epsilon$, then $M$ is not open.
\end{proposition}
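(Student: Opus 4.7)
The plan is to prove closedness by introducing, for a fixed representative $(x_\varepsilon)$ of a candidate point $x$, the scalar distance $d_\varepsilon:=\mathrm{dist}(x_\varepsilon,M_\varepsilon)$. By Lemma \ref{closed} I may replace each $M_\varepsilon$ by its closure, so $M_\varepsilon$ is a compact subset of $K$ and the infimum is attained. The sequence $(d_\varepsilon)$ is moderate (bounded by $\mathrm{diam}(K)$) and, routinely, its class in $\overline{\R}$ is independent of the representative chosen. The key equivalence I aim for is
\[
x\in M\quad\Longleftrightarrow\quad(d_\varepsilon)\text{ is a null sequence.}
\]
The forward direction uses a representative of $x$ with values in $M_\varepsilon$; the converse uses compactness to pick a nearest point $m_\varepsilon\in M_\varepsilon$, which then is itself a representative of $x$ lying in $M_\varepsilon$. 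To conclude closedness, take $x$ in the sharp closure of $M$: for every $r>0$ there is $y^{(r)}\in M\cap V_r[x]$, and unwinding the representative-level definitions yields $d_\varepsilon\leq\varepsilon^r+\nu_\varepsilon^{(r)}$ with $(\nu_\varepsilon^{(r)})$ null, so $(d_\varepsilon)=O(\varepsilon^r)$ for every $r$; hence $(d_\varepsilon)$ is null and $x\in M$.

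For the second assertion I will produce, under the convexity hypothesis, a point of $M$ admitting no sharp neighborhood inside $M$. For each small $\varepsilon$ I pick a boundary point $m^0_\varepsilon\in\partial M_\varepsilon$ and, by the supporting hyperplane theorem, a unit vector $v_\varepsilon$ with $\langle m-m^0_\varepsilon,v_\varepsilon\rangle\leq 0$ for every $m\in M_\varepsilon$ (when $M_\varepsilon$ has empty interior, I take $v_\varepsilon$ normal to its affine hull). Both $(m^0_\varepsilon)\subset K$ and $|v_\varepsilon|=1$ are moderate, so $m^0=[(m^0_\varepsilon)]\in M$ (by Lemma \ref{closed}) and $v=[(v_\varepsilon)]$ is a well-defined element of $\overline{\R}^{\,n}$. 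If, for contradiction, $V_r[m^0]\subset M$ for some $r>0$, then $z:=m^0+\alpha_r v$ lies in $M$, so there is a null sequence $(N_\varepsilon)$ with $m^0_\varepsilon+\varepsilon^r v_\varepsilon+N_\varepsilon\in M_\varepsilon$. The supporting inequality then forces $\varepsilon^r+\langle N_\varepsilon,v_\varepsilon\rangle\leq 0$, but $(\langle N_\varepsilon,v_\varepsilon\rangle)$ is null and hence $o(\varepsilon^r)$, so the left-hand side is strictly positive for small $\varepsilon$, a contradiction.

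The main obstacle is keeping the representative-bookkeeping clean: both membership in $V_r[x]$ and membership in $M$ are \emph{existence} statements about representatives, so one must not confuse ``there is a representative in $M_\varepsilon$'' with ``this fixed representative is in $M_\varepsilon$''. The scalar distance $d_\varepsilon$ in the first part is precisely the device that bypasses this, and in the second part the essential point is that the non-null outward shift $\varepsilon^r v_\varepsilon$ strictly dominates any null perturbation along the direction $v_\varepsilon$. Degenerate cases (lower-dimensional $M_\varepsilon$, or $M_\varepsilon$ empty on a null-equivalent modification of indices) are absorbed by Lemma \ref{closed} and by the affine-hull choice of $v_\varepsilon$.
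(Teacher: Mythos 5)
Your proof is correct, and the first half takes a genuinely different route from the paper. For closedness, the paper argues that the \emph{complement} of $M$ is open: for $x\notin M$ the generalized distance $d=[\varepsilon\mapsto \mathrm{dist}(x_\varepsilon,M_\varepsilon)]$ is a non-zero element of $\OR$, and the paper invokes the algebraic fact (from the structure theory of $\OK$) that a non-zero element $w$ admits an idempotent $e$ and $r\in\R$ with $e\cdot\alpha_r<e\cdot|w|$; multiplying the chain of inequalities by $e$ then yields the contradiction for any $y\in V_s[x]\cap M$. You instead characterize membership directly, $x\in M\Longleftrightarrow (\mathrm{dist}(x_\varepsilon,M_\varepsilon))$ is null, and show that any point of the sharp closure satisfies $d_\varepsilon=O(\varepsilon^r)$ for every $r$, hence lies in $M$. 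Your version avoids the idempotent decomposition entirely (it never needs the dichotomy hidden in ``non-zero'' elements of $\OR$, which is the delicate point in the paper's argument) at the price of a small amount of representative bookkeeping, which you handle correctly; both arguments use Lemma \ref{closed} in the same way to make the infimum attained. For the non-openness statement your argument is essentially the paper's: the paper also takes a boundary point $x_\varepsilon\in\partial M_\varepsilon$ and a unit outward direction $y_\varepsilon$ (implicitly a supporting-hyperplane normal, which is exactly where convexity enters) so that $z_\varepsilon=x_\varepsilon+\varepsilon^r y_\varepsilon$ has distance at least $\varepsilon^r$ from $\overline{M_\varepsilon}$, whence $z\in V_r[x]\setminus M$. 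You make the supporting hyperplane and the absorption of the null perturbation $N_\varepsilon$ explicit, and you cover the degenerate case of empty interior; this is a more careful write-up of the same idea.
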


\begin{proof}
Let $M$ be a pseudo-membrane and choose $[(x_{\varepsilon})] = x\not\in M$.
We may suppose, by Lemma~\ref{closed}, that $M=j([(M_{\varepsilon})])$, with
all $M_{\varepsilon}$ closed. It follows that $dist(x,M) = d:=[\varepsilon
\rightarrow dist(x_{\varepsilon},M_{\varepsilon})]$ is a non-zero element of 
${\overline{{\mathbb{K}}}}$ and hence there exists an idempotent $e\in {%
\overline{{\mathbb{K}}}}$ and $r\in {\mathbb{R}}$ such that $e\cdot
\alpha_r<e\cdot d$. Now let $r<s$ and $y\in V_s[x]\bigcap M$. Then, $d\leq
dist(x,y)<\alpha_s<\alpha_r$ and thus $e\cdot \alpha_r<e\cdot d\leq e\cdot
dist(x,y)<e\cdot \alpha_s<e\cdot \alpha_r$, a contradiction.

Let $x\in M$ be an interior point whose representative $(x_{\varepsilon})$
satisfies $x_{\varepsilon}\in \partial M_{\varepsilon}$. Since $x$ is an
interior point, there exists $r\in {\mathbb{R}}$ such that $V_r[x]\subset M$%
. Since the $M_{\varepsilon}$'s are convex, we may choose points $%
y_{\varepsilon}$ of norm $1$ such that $z_{\varepsilon}:=x_{\varepsilon}+{%
\varepsilon}^r y_{\varepsilon}$ satisfies $d(z_{\varepsilon}, {\overline{%
M_{\varepsilon}}})\geq \varepsilon^r$. Set $z=[(z_{\varepsilon})]$; then $%
dist(z,M)\geq\alpha_r$ and hence $z\not \in M$. On the other hand $z\in
V_r[x]\subset M$, a contradiction.
\end{proof}

As a corollary we have the following result.

\begin{corollary}
Let $(M_n)$ be a decreasing sequence of pseudo-membranes with diameters
tending to zero. Then $\displaystyle\bigcap_{n\in\mathbf{N}} M_n$ consists
of a single point.
\end{corollary}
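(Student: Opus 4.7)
The plan is the standard Cantor intersection argument, adapted to the sharp topology, using the preceding proposition to know that each $M_n$ is sharp-closed.

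First I would dispose of uniqueness. If $x,y\in\bigcap_{n\in\mathbf{N}}M_n$, then $d(x,y)\leq\mathrm{diam}(M_n)$ for every $n$. Since $\mathrm{diam}(M_n)\to 0$ in the sharp sense (say, $\mathrm{diam}(M_n)\leq\alpha_{r_n}$ with $r_n\to\infty$), this forces $d(x,y)=0$ in $\overline{\mathbb{R}}$, hence $x=y$.

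For existence, I would choose a point $x_n\in M_n$ for each $n\in\mathbf{N}$ and argue that $(x_n)$ is a Cauchy sequence in $\overline{\mathbb{K}}^n$ with respect to the sharp topology. Indeed, for $m\geq n$ the monotonicity $M_m\subset M_n$ gives $x_n,x_m\in M_n$, so $d(x_n,x_m)\leq\mathrm{diam}(M_n)$, which tends to $0$. Since $\overline{\mathbb{K}}^n$ is complete in the sharp topology (a standard fact recorded in the references \cite{afj1}, \cite{afj2} cited for the sharp topology), the sequence converges to some $x\in\overline{\mathbb{K}}^n$. For each fixed $n$, the tail $(x_k)_{k\geq n}$ lies in $M_n$, and by the previous proposition $M_n$ is sharp-closed, so the limit $x$ belongs to $M_n$. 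This holds for every $n$, hence $x\in\bigcap_{n\in\mathbf{N}}M_n$.

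Combined with uniqueness, the intersection consists of exactly the single point $x$. The only delicate point is invoking completeness of $\overline{\mathbb{K}}^n$ in the sharp topology; this is not proved in the excerpt but is a well-known property of Scarpalezos' sharp topology and should be cited from \cite{afj1} or \cite{afj2}. Everything else reduces to the trivial estimate $d(x_n,x_m)\leq\mathrm{diam}(M_{\min(n,m)})$ together with closedness of pseudo-membranes.
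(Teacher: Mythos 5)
Your argument is correct and is exactly the route the paper takes: its proof is the one-line remark that the claim follows from completeness of $\overline{\mathbb{K}}^n$ in the sharp topology together with the closedness of pseudo-membranes established in the preceding proposition, and your write-up simply fills in the standard Cantor intersection details (Cauchy sequence of chosen points, limit lies in each closed $M_n$, uniqueness from vanishing diameters).
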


\begin{proof}
This is clear since ${\overline{{\mathbb{K}}}}^n$ is a complete metric space
and pseudo-membranes are closed.
\end{proof}

Observe that the element $x=[(\frac{1}{-ln \varepsilon})]\approx 0$ and has
norm $1$. It follows that $B_1(0)$ is a proper subset of ${\overline{{%
\mathbb{K}}}}_0=\{x\in {\overline{{\mathbb{K}}}} \ | \ x\approx 0\}$.
Observe also that $M=B_1(0)$ is not a membrane: in fact suppose that $%
M=[(M_{\varepsilon})]$ and let $N_{\varepsilon}$ be the convex hull of $%
M_{\varepsilon}$. As proved above, we may suppose that $M_{\varepsilon}$ is
closed for all $\varepsilon$. As is easily seen, $N=[(N_{\varepsilon})]$ is
contained in the convex hull of $M$. Since $M$ is a sub ring of ${\overline{{%
\mathbb{K}}}}$, it follows that it equals its convex hull. It follows that $%
M=N$ and hence is not open because all the $N_{\varepsilon}$'s are convex, a
contradiction.

\begin{example}
\label{esferas}

Let $x\in {\widetilde{{\mathbb{R}}^n}}_c$, $r\in Inv({\overline{{\mathbb{R}}}%
})$ and let $(x_{\varepsilon})\,,\,\ (x^{\prime}_{\varepsilon})$ be
representatives of $\,x$ and $\ (r_{\varepsilon})\,,\,
(r^{\prime}_{\varepsilon} )$ be representatives of $\,r$. Consider the
pre-membranes $(M_{\varepsilon}:=B_{r_{\varepsilon}}(x_{\varepsilon}))$ and $%
(M^{\prime}_{\varepsilon}:=B_{r^{\prime}_{\varepsilon}}(x^{\prime}_{%
\varepsilon}))$. Define $(\Psi_{\varepsilon})$ by

\vskip-0.3cm 
\begin{equation*}
\Psi_{\varepsilon}(y):={\frac{r^{\prime}_{\varepsilon}-r_{\varepsilon}}{{%
r_{\varepsilon}}}}(y-x_{\varepsilon})+{\frac{r_{\varepsilon}-r^{\prime}_{%
\varepsilon}}{{r_{\varepsilon}}}}(x^{\prime}_{\varepsilon}-x_{\varepsilon})%
\,.
\end{equation*}

\vskip-0.1cm \noindent Then $(\Psi_{\varepsilon})\in {\mathcal{N}}({\mathbb{R%
}}^n )$ and $\phi(\varepsilon,w):= w+\Psi_{\varepsilon}(w)$ , $%
\forall\,(\varepsilon,w)\in\mathbf{I}\times{\mathbb{R}}^n$, satisfies ${\phi}%
(\varepsilon
,M_{\varepsilon})=M^{\prime}_{\varepsilon}\,,\,\forall\,\varepsilon\in\mathbf%
{I}$. Hence $(M_{\varepsilon})$ and $(M^{\prime}_{\varepsilon})$ are
equivalent. When $r=\alpha_s$ then $j([(M_{\varepsilon})])$ is just $V_s[x]$%
. Its volume is $\ vol(V_s[x])=\pi\alpha_s^2=\pi\alpha_{2s}$.
\end{example}

\vspace{0.3cm} For this reason we call $V_s[x]$ a \textit{generalized ball}
whose center is $\ x\ $ and whose radius is $\ \alpha_s$. By a \textit{%
generalized sphere} we shall mean a set of the form $\{x\in {\tilde{\Omega}}%
_c| \ \|x-x_0\|=\alpha_s\}$, for some $s\in {\mathbb{R}}$ and $x_0\in {%
\tilde{\Omega}}_c$.

\vspace{0.3cm}  We are now in position to define a notion of integration of
generalized functions that is consistent with the differential calculus we
have developed so far. For this we need the following result.

\begin{proposition}
\label{welldef} Let $f\in {\mathcal{G}}(\Omega )$, $(f_{\varepsilon})$ a
representative of $\ f$ and $\lambda$ the Lebesgue measure on ${\mathbb{R}}^n
$.

\begin{enumerate}
\item If $\,(M_{\varepsilon})$ is a pre-membrane, then the function $%
\varepsilon\longmapsto\int_{M_{\varepsilon}}f_{\varepsilon}\,d\lambda$ is
moderate (is null if $(f_{\varepsilon})$ is null).

\item If $\,(M_{\varepsilon})\,$ and $\,(M^{\prime}_{\varepsilon})\,$ are
equivalent pre-membranes, then 
\begin{equation*}
[\,\varepsilon\longmapsto
\int_{M_{\varepsilon}}f_{\varepsilon}\,d\lambda]=[\varepsilon\longmapsto
\int_{M^{\prime}_{\varepsilon}}f_{\varepsilon}\,d\lambda]\,.
\end{equation*}

\item If $\,(M_{\varepsilon})\,$ and $\,(M^{\prime}_{\varepsilon})\,$ are
equivalent pre-membranes and $(g_{\varepsilon})$ is a representative of $f$,
then $\,[\varepsilon\longmapsto
\int_{M_{\varepsilon}}f_{\varepsilon}\,d\lambda]=[\varepsilon\longmapsto
\int_{M^{\prime}_{\varepsilon}}g_{\varepsilon}\,d\lambda]\,.$
\end{enumerate}
\end{proposition}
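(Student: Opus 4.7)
The proof splits naturally into three parts, with Parts 1 and 3 essentially routine and Part 2 carrying the main technical content, namely a change-of-variables argument.

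For Part 1, I would fix a compact $K\subset\Omega$ and $\eta\in\mathbf{I}$ with $M_\varepsilon\subset K$ for $\varepsilon\in\mathbf{I}_\eta$, as guaranteed by condition~1 of Definition~\ref{membrana}. Moderateness of $(f_\varepsilon)$ as a representative in $\mathcal{G}(\Omega)$ furnishes $C>0$ and $N\in\mathbb{N}$ with $\sup_K|f_\varepsilon|\le C\varepsilon^{-N}$ for small $\varepsilon$. Combined with $vol(M_\varepsilon)\le vol(K)$ and the Riemann integrability of $\chi_{M_\varepsilon}$ required by condition~2, this yields $\bigl|\int_{M_\varepsilon}f_\varepsilon\,d\lambda\bigr|\le C\,vol(K)\,\varepsilon^{-N}$, hence moderateness. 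If $(f_\varepsilon)$ is null, the same estimate holds with $\varepsilon^{-N}$ replaced by $\varepsilon^m$ for arbitrary $m\in\mathbb{N}$, yielding nullness.

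For Part 2, let $(\Psi_\varepsilon)$ be the null function implementing the equivalence and set $\phi_\varepsilon(x):=x+\Psi_\varepsilon(x)$, $J_\varepsilon(x):=I+D\Psi_\varepsilon(x)$. Since $\Psi_\varepsilon$ and all its derivatives are null on compact sets, $\sup_{K'}\|J_\varepsilon-I\|\to 0$ on any compact $K'\subset\Omega$, so for $\varepsilon$ small enough $\phi_\varepsilon$ is a $C^1$-diffeomorphism on a neighborhood of $K$. The classical change-of-variables formula then gives
$$\int_{M'_\varepsilon}f_\varepsilon\,d\lambda=\int_{M_\varepsilon}f_\varepsilon(\phi_\varepsilon(x))\,|\det J_\varepsilon(x)|\,dx.$$
Subtracting $\int_{M_\varepsilon}f_\varepsilon\,d\lambda$ and splitting, the difference equals
$$\int_{M_\varepsilon}\!\bigl[f_\varepsilon(\phi_\varepsilon(x))-f_\varepsilon(x)\bigr]|\det J_\varepsilon(x)|\,dx+\int_{M_\varepsilon}\!f_\varepsilon(x)\bigl[|\det J_\varepsilon(x)|-1\bigr]dx.$$
The mean value theorem, applied on a slightly enlarged compact containing $\phi_\varepsilon(K)$ for small $\varepsilon$, bounds the first integrand by $\sup|Df_\varepsilon|\cdot|\Psi_\varepsilon|$, a moderate factor times a null one; expanding $|\det(I+D\Psi_\varepsilon)|-1$ as a polynomial in the entries of $D\Psi_\varepsilon$ shows the second factor in the second integrand is null, while $f_\varepsilon$ remains moderate on $K$. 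Multiplying by $vol(K)$ shows both pieces are null, so the two integrals agree in $\overline{\mathbb{K}}$.

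For Part 3, I combine the first two parts via
$$\int_{M_\varepsilon}f_\varepsilon\,d\lambda-\int_{M'_\varepsilon}g_\varepsilon\,d\lambda=\Bigl(\int_{M_\varepsilon}f_\varepsilon\,d\lambda-\int_{M'_\varepsilon}f_\varepsilon\,d\lambda\Bigr)+\int_{M'_\varepsilon}(f_\varepsilon-g_\varepsilon)\,d\lambda;$$
the bracketed term is null by Part 2, and the remaining integral is null by Part 1 applied to the null family $(f_\varepsilon-g_\varepsilon)$. The principal obstacle is Part 2: one must verify that $\phi_\varepsilon$ is genuinely injective and $C^1$-invertible on a neighborhood of a compact set simultaneously containing $M_\varepsilon$ and $M'_\varepsilon$ so the classical change-of-variables formula truly applies, and that the slightly enlarged compact on which moderate bounds for $f_\varepsilon$ and $Df_\varepsilon$ are used still sits inside $\Omega$. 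Both are straightforward consequences of the nullness of $(\Psi_\varepsilon)$ and its derivatives, but they require explicit verification before the estimates above can be invoked.
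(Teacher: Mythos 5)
Your proposal is correct and follows essentially the same route as the paper: assertion~1 by the compact-support and moderateness bounds, assertion~2 by the change of variables $\phi_\varepsilon=\mathrm{id}+\Psi_\varepsilon$ with the difference split into a mean-value-theorem term controlled by $\sup|Df_\varepsilon|\cdot|\Psi_\varepsilon|$ and a Jacobian term controlled by the nullness of $\det(I+D\Psi_\varepsilon)-1$, and assertion~3 by the triangle inequality reducing to the first two. The paper likewise covers $K$ by balls $B_{r_j}(x_j)\subset B_{2r_j}(x_j)\subset\Omega$ to secure the enlarged compact you mention, so the only difference is the cosmetic one of which term carries the Jacobian factor.
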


\begin{proof}
The assertion 1. is obvious. For 2. and 3., let $\Psi=[(\Psi_{\varepsilon})]
\in {\mathcal{N}}(\Omega;{\mathbb{R}}^n)$ and ${\phi}(\varepsilon,x):=
x+\Psi_{\varepsilon}(x)$, $\forall\,(\varepsilon,x)\in\mathbf{I}\times\Omega$%
, such that ${\phi}(\varepsilon
,M_{\varepsilon})=M^{\prime}_{\varepsilon}\,,\,\forall\,\varepsilon\in\mathbf%
{I}$. Denote by $D{\phi}$ the Jacobian matrix of ${\phi}$ and let $J$ be its
determinant. Then we have that $J=1+\tau$ with $\tau \in {\mathcal{N}}%
(\Omega )$ and {\small 
\begin{equation*}
|\int_{M_{\varepsilon}}{f}_{\varepsilon}d\lambda-\int_{M^{\prime}_{%
\varepsilon}}{f}_{\varepsilon}d\lambda|\leq \int_{M_{\varepsilon}}|\,{f}%
_{\varepsilon}(x)-{f}_{\varepsilon}({\phi}(\varepsilon ,x))|\,dx+
\int_{M_{\varepsilon}}|{f}_{\varepsilon}({\phi}(\varepsilon ,x))\tau
(\varepsilon , x)|\,dx\,.
\end{equation*}%
}  Choosing a compact subset $K$ containing $\cup_{\varepsilon\in\,\mathbf{I}%
}(M_{\varepsilon}\cup M^{\prime}_{\varepsilon})$,  we can find $%
x_1,...,x_s\in K$ and $r_1,...,r_s>0\,$ such that  
\begin{equation*}
\,K\subset\displaystyle L_1:=\cup_{1\leq j\leq s}B_{r_j}^{\prime
}(x_j)\subset L:=\cup_{1\leq j\leq s}B_{2r_j}^{\prime }(x_j)\subset\Omega\,.
\end{equation*}
As $\Psi\in \mathcal{N}(\Omega;{\mathbb{R}}^n)$ there is $\eta_1\in\mathbf{I}
$ such that $\phi_{\varepsilon}(B_{r_j}^{\prime }(x_j))\subset
B_{2r_j}^{\prime }(x_j)\,,\,\forall\varepsilon\in\mathbf{I}_{\eta_1}$. Since 
$\,f\,$ is moderate there are $\,N\in{\mathbb{N}}\,$, $c>0$ and $\,\eta\in%
\mathbf{I}_{\eta_1}\,$ such that $\,\max\{\,||\nabla {f}_{\varepsilon}(x)||%
\,,\,|f_{\varepsilon}(x)|\,\}\leq c\varepsilon^{-N}\,\,,\,\forall\,x\in
L\,,\,\forall\,\varepsilon\in\mathbf{I}_{\eta}\,$. Hence, by the Mean Value
Theorem, noting that $B_{r_j}^{\prime }(x_j)\subset B_{2r_j}^{\prime }(x_j)$
and $B_{2r_j}^{\prime }(x_j)$ are convex for all $\,1\leq j\leq s$, we
conclude that 
\begin{eqnarray*}
\int_{M_{\varepsilon}}|\,{f}_{\varepsilon}(x)-{f}_{\varepsilon}(\phi(%
\varepsilon ,x))|\,dx&\leq&\sum_{1\leq j\leq s}\int_{B_{r_j}^{\prime
}(x_j)}|\,{f}_{\varepsilon}(x)-{f}_{\varepsilon}({\phi}(\varepsilon ,x))|\,dx%
\cr &\leq&\sum_{1\leq j\leq s}\int_{B_{r_j}^{\prime
}(x_j)}c\varepsilon^{-N}|\Psi(\varepsilon,x)|\,dx\,,\cr
\end{eqnarray*}
$\,\forall\varepsilon\in\mathbf{I}_{\eta}$. Using that $\Psi\in \mathcal{N}%
(\Omega;{\mathbb{R}}^n)\,$, $\,\tau \in {\mathcal{N}}(\Omega )$ and $\,|{f}%
_{\varepsilon}({\phi}(\varepsilon ,x))\tau (\varepsilon , x)|\leq
\varepsilon^{-N}|\tau(\varepsilon,x)|\,,\,\forall\,\varepsilon\in\mathbf{I}%
_{\eta}\,,\,\forall\,x\in L_1$, we conclude that 2. holds. For 3. note that 
\begin{equation*}
|\int_{M_{\varepsilon}}{f_{\varepsilon}}\,d\lambda-\int_{M^{\prime}_{%
\varepsilon}}{g_{\varepsilon}}\,d\lambda|\leq|\int_{M_{\varepsilon}}{%
f_{\varepsilon}}\,d\lambda-\int_{M^{\prime}_{\varepsilon}}{f_{\varepsilon}}%
\,d\lambda|+\int_{M^{\prime}_{\varepsilon}}|f_{\varepsilon}-g_{\varepsilon}|%
\,d\lambda\,.
\end{equation*}
The result now readily follows from the others assertions.
\end{proof}

The proposition above guarantees that the following definition makes sense.

\begin{definition}
Let $f=[(f_{\varepsilon})]\in {\mathcal{G}}(\Omega )$, $M=[(M_{\varepsilon})]
$ a membrane of ${\overline{{\mathbb{R}}}}^n$ and $\lambda$ the Lebesgue
measure on ${\mathbb{R}}^n$. The generalized number

\vskip-0.3cm  
\begin{equation*}
\int_Mf:=[\varepsilon\longmapsto\int_{M_{\varepsilon}}f_{\varepsilon}\,d%
\lambda\,]
\end{equation*}
is called the integral of $\,f$ over the membrane $\,M$.
\end{definition}

\begin{definition}
Let $\Omega\subset{\mathbb{R}}^n$, $\,f=[(f_{\varepsilon})]\in ({\mathcal{G}}%
(\Omega ))^n$ and $\,\gamma=(\gamma_{\varepsilon})$ a history.

\begin{enumerate}
\item If ${\mathbb{K}}={\mathbb{R}}$, then the generalized number

\vskip-0.3cm 
\begin{equation*}
\int_{\gamma}\,f\,d\gamma:=[\varepsilon\longmapsto\int_0^1\,<
f_{\varepsilon}(\gamma_{\varepsilon}(t))\,|\,\gamma_{\varepsilon}^{%
\prime}(t)>\,dt]\,
\end{equation*}
is called the (line) integral of $\,f\,$ along $\gamma$, where $%
\,<\,\cdot\,|\,\cdot\,>\,$ denotes the standard inner product of $\,{\mathbb{%
R}}^n$.

\item If $n=2$, $\Omega\subset {\mathbb{C}}$, $\,{\mathbb{K}}={\mathbb{C}}$
and $f\in{\mathcal{G}}(\Omega )$, then the generalized number

\vskip-0.3cm 
\begin{equation*}
\int_{\gamma}\,f\,dz:=[\varepsilon\longmapsto\int_0^1\,
f_{\varepsilon}(\gamma_{\varepsilon}(t))\gamma_{\varepsilon}^{\prime}(t)\,dt%
\,]
\end{equation*}
is called the integral of $\,f\,$ along $\gamma$.
\end{enumerate}
\end{definition}

It is easy to verify that these definitions make sense.

Note that if $n=1$ and $([a_{\varepsilon},b_{\varepsilon}])$ is a membrane,
then the history $\gamma=(\gamma_{\varepsilon})$ where $\gamma_{%
\varepsilon}(t):=a_{\varepsilon}+t(b_{\varepsilon}-a_{\varepsilon})\,,\,%
\forall\,t\in\,[0,1]$, can be identified with the element $%
(a,b):=([(a_{\varepsilon})],[(b_{\varepsilon})])\in\widetilde{{\mathbb{R}}^2}%
_c$ and $\int_{([a_{\varepsilon},b_{\varepsilon}])}\,f=\int_{\gamma}\,f\,d%
\gamma=\int_a^b\,f$, where the last integral is the one given in \cite[%
section 4]{afj}. Conversely an element $(c,d):=([c_{\varepsilon},d_{%
\varepsilon}])\in \widetilde{{\mathbb{R}}^2}_c$ defines a history $%
\beta=(\beta_{\varepsilon})$ such that $\int_{\beta}\,f\,d\beta=\int_c^d\,f$
(it is enough to define $\beta_{\varepsilon}=c_{\varepsilon}+t(d_{%
\varepsilon}-c_{\varepsilon})\,,\,$ if $c_{\varepsilon}\leq d_{\varepsilon}$
and $\beta_{\varepsilon}=d_{\varepsilon}+t(d_{\varepsilon}-c_{\varepsilon})%
\,,\,$ if $c_{\varepsilon}> d_{\varepsilon}$, $\forall\,t\in\,[0,1]$. In
this case, the definition given here agrees with the one given in \cite[%
section 4]{afj}.


\section{\textbf{Calculus on Membranes}}

In this section we give some applications of the theory developed in the
previous section.

\begin{proposition}
\label{desig} Let $\,f\in {\mathcal{G}}(\Omega )$, $M$ a membrane of $\,{%
\overline{{\mathbb{R}}}}^n$ and $\lambda$ the Lebesgue measure on ${\mathbb{R%
}}^n$. Then we have:

\begin{enumerate}
\item There exists $x_0\in {\tilde{\Omega}}_c$ such that $\int_M
f=vol(M)f(x_0)$.

\item There exists $r\in {\mathbb{R}}$ such that $|\int_Mf\,d\lambda|\leq
vol(M)\alpha_r$.
\end{enumerate}
\end{proposition}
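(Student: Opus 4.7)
The plan is to prove both parts by a levelwise application of classical results followed by passage to the quotient, exploiting the fact that any pre-membrane representative $(M_\varepsilon)$ lies in some fixed compact $K\subset\Omega$ for $\varepsilon\in\mathbf{I}_\eta$.

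For part~1, I would fix representatives $(f_\varepsilon)$ of $f$ and $(M_\varepsilon)$ of $M$ and apply the classical integral mean value theorem at each $\varepsilon$. Since $f_\varepsilon$ is continuous and $\overline{M_\varepsilon}\subset K$ is compact, $f_\varepsilon$ attains its minimum at some $y_\varepsilon$ and its maximum at some $z_\varepsilon$, and the average
$A_\varepsilon:=vol(M_\varepsilon)^{-1}\int_{M_\varepsilon}f_\varepsilon\,d\lambda$
satisfies $f_\varepsilon(y_\varepsilon)\leq A_\varepsilon\leq f_\varepsilon(z_\varepsilon)$. Joining $y_\varepsilon$ and $z_\varepsilon$ by a continuous path in $\Omega$ and applying the one-dimensional IVT to the restriction of $f_\varepsilon$ to that path produces $x_\varepsilon\in\Omega$ with $f_\varepsilon(x_\varepsilon)=A_\varepsilon$. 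Because $(x_\varepsilon)$ remains in a fixed compact subset of $\Omega$, it is moderate and defines a point $x_0\in\tilde{\Omega}_c$; multiplying the level identity $f_\varepsilon(x_\varepsilon)vol(M_\varepsilon)=\int_{M_\varepsilon}f_\varepsilon\,d\lambda$ through and passing to the quotient via $\kappa$ gives $\int_M f=vol(M)\,f(x_0)$.

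For part~2, I would derive the estimate either as an immediate corollary of part~1 (since $|f(x_0)|$ is a moderate nonnegative generalized real, hence $\leq\alpha_r$ for some $r\in\mathbb{R}$) or directly from moderateness: pick $N\in\mathbb{N}$, $c>0$ and $\eta\in\mathbf{I}$ with $|f_\varepsilon(x)|\leq c\varepsilon^{-N}$ for all $x\in K$ and $\varepsilon\in\mathbf{I}_\eta$; then $|\int_{M_\varepsilon}f_\varepsilon\,d\lambda|\leq c\varepsilon^{-N}vol(M_\varepsilon)$, which upon passage to the quotient yields $|\int_M f\,d\lambda|\leq\alpha_r\cdot vol(M)$ for, say, $r=-N-1$.

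The delicate point is the connectedness hypothesis hidden in the classical mean value theorem: realizing the scalar $A_\varepsilon$ as a value of $f_\varepsilon$ requires $y_\varepsilon$ and $z_\varepsilon$ to lie in a common path-connected piece of $\Omega$, and this is not forced by the pre-membrane definition. In the geometrically relevant cases -- in particular the generalized balls of Example~\ref{esferas} on which the subsequent Cauchy formula is built -- each $M_\varepsilon$ is itself connected and can be enclosed in a compact connected subset of $\Omega$, so the joining path exists and the IVT argument is carried out uniformly in $\varepsilon$.
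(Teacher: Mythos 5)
Your proof is correct and follows essentially the same route as the paper, whose entire argument is the one-line remark that item~1 ``follows readily from its classical analog'' (the integral mean value theorem applied levelwise) and that item~2 follows from item~1 together with moderateness of $f(x_0)$ in $\overline{\mathbb{K}}$. The connectedness caveat you raise at the end is a genuine gap in the statement that the paper's proof silently ignores --- the classical mean value theorem needs each $M_{\varepsilon}$ (or at least a fixed compact connected subset of $\Omega$ containing all of them) to be connected, which the pre-membrane definition does not guarantee --- so your explicit acknowledgment of it is a point in your favour rather than a defect.
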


\begin{proof}
The first item follows readily from its classical analog and the second one
follows from the first one and the definition of $\ {\overline{{\mathbb{K}}}}
$.
\end{proof}

In what follows $e_i$ will stand for $(0,\cdots ,1,\cdots )\in {\overline{{%
\mathbb{K}}}}^n$ and $\langle \cdot|\cdot \rangle$ will denote the standard
bilinear form induced by the standard inner product of $\ {\mathbb{K}}^n$.

It is easily seen that if $f:{\tilde{\Omega}}_c \rightarrow {\overline{{%
\mathbb{K}}}}$ is differentiable at $x_0\in {\tilde{\Omega}}_c$, then there
exist a continuous function $\phi$ with $\phi (x_0)=0$ and such that $%
f(x)-f(x_0)= \langle\nabla f(x_0)|(x-x_0)\rangle +\phi
(x)\alpha_{-\log\,||x-x_0||}.$ From this and the fact that if $\gamma :%
\tilde{{\mathbb{K}}}_c\rightarrow {\tilde{\Omega}}_c$ is differentiable at $%
t_0$ then {\small 
\begin{eqnarray*}
{\frac{||\gamma(t)-\gamma(t_0)||}{{||t-t_0||}}}&=&{\frac{||\gamma(t)-%
\gamma(t_0)-\gamma^{\prime }(t_0)(t-t_0) +\gamma^{\prime }(t_0)(t-t_0)||}{{%
||t-t_0||}}}\cr\cr\cr &=&\left |\left|{\frac{\gamma(t)-\gamma(t_0)-\gamma^{%
\prime }(t_0)(t-t_0)+\gamma^{\prime }(t_0)(t-t_0)}{{\alpha_{-\log\,||t-t_0||}%
}}}\right |\right |\cr\cr\cr &\leq&\max\left\{\left |\left |{\frac{%
\gamma(t)-\gamma(t_0)-\gamma^{\prime }(t_0)(t-t_0)}{{\alpha_{-\log%
\,||t-t_0||}}}}\right |\right |\,,\,\left |\left |{\frac{\gamma^{\prime
}(t_0)(t-t_0)}{{\alpha_{-\log\,||t-t_0||}}}}\right |\right |\right\}\cr\cr%
\cr &=&\max\left\{ \left |\left |{\frac{\gamma(t)-\gamma(t_0)-\gamma^{\prime
}(t_0)(t-t_0)}{{\alpha_{-\log\,||t-t_0||}}}}\right |\right
|\,\,,\,\,||\gamma^{\prime }(t_0)||\right\}\cr
\end{eqnarray*}%
}

We deduce, using standard techniques of differential calculus, the chain
rule for curves:

\begin{theorem}
\label{curves}

Let $f:{\tilde{\Omega}}_c \rightarrow {\overline{{\mathbb{K}}}}$ and $\gamma
:\tilde{{\mathbb{K}}}_c\rightarrow {\tilde{\Omega}}_c$. If $\gamma$ is
differentiable at $t_0$ and $f$ is differentiable at $x_0=\gamma (t_0)$ then 
$F:=f\circ \gamma$ is differentiable at $t_0$ and $F^{\prime}(t_0)=\langle
\nabla f(\gamma(t_0))|\gamma^{\prime}(t_0)\rangle$.
\end{theorem}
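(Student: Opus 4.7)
The plan is to mimic the classical chain-rule proof for curves, substituting ordinary $o$-remainders by the intrinsic remainders of this setting (those that vanish when divided by $\alpha_{-\log\|\cdot\|}$) and relying on the equivalence recalled at the beginning of Section~2.

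Set $x_{0}:=\gamma(t_{0})$. The displayed expansion preceding the theorem gives a function $\phi$, continuous at $x_{0}$ with $\phi(x_{0})=0$, such that
\[
f(\gamma(t))-f(x_{0}) \;=\; \langle\nabla f(x_{0})\mid \gamma(t)-x_{0}\rangle \;+\; \phi(\gamma(t))\,\alpha_{-\log\|\gamma(t)-x_{0}\|}.
\]
Differentiability of $\gamma$ at $t_{0}$ supplies the analogous expansion
\[
\gamma(t)-x_{0} \;=\; \gamma'(t_{0})(t-t_{0})+\rho(t),\qquad \rho(t)/\alpha_{-\log\|t-t_{0}\|}\to 0.
\]
Substituting one into the other and separating the linear-in-$(t-t_{0})$ term produces
\[
F(t)-F(t_{0}) \;=\; \langle\nabla f(x_{0})\mid\gamma'(t_{0})\rangle (t-t_{0}) \;+\; A(t)+B(t),
\]
with $A(t):=\langle\nabla f(x_{0})\mid\rho(t)\rangle$ and $B(t):=\phi(\gamma(t))\,\alpha_{-\log\|\gamma(t)-x_{0}\|}$. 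The goal is to show that $(A(t)+B(t))/\alpha_{-\log\|t-t_{0}\|}\to 0$, which by the equivalence at the beginning of Section~2 is exactly what the theorem demands.

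For $A(t)$ this is immediate: $A(t)/\alpha_{-\log\|t-t_{0}\|}$ is the pairing of the fixed element $\nabla f(x_{0})$ with $\rho(t)/\alpha_{-\log\|t-t_{0}\|}$, which tends to $0$ by hypothesis, and multiplication by a fixed element of $\overline{\mathbb{K}}^{n}$ preserves sharp limits. For $B(t)$, differentiability of $\gamma$ yields continuity at $t_{0}$, so $\gamma(t)\to x_{0}$ in the sharp topology, and continuity of $\phi$ at $x_{0}$ gives $\phi(\gamma(t))\to 0$. The max-inequality displayed just before the theorem bounds $\|\gamma(t)-x_{0}\|/\|t-t_{0}\|$ by a generalized number that, for $t$ sharp-close to $t_{0}$, is dominated by some fixed $M\in\overline{\mathbb{R}}$. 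Converting this linear comparison of norms into a multiplicative comparison of the $\alpha_{-\log}$-scales shows that $\alpha_{-\log\|\gamma(t)-x_{0}\|}/\alpha_{-\log\|t-t_{0}\|}$ is bounded, and combining with $\phi(\gamma(t))\to 0$ gives $B(t)/\alpha_{-\log\|t-t_{0}\|}\to 0$.

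The main obstacle is this last step: translating the inequality $\|\gamma(t)-x_{0}\|\leq M\|t-t_{0}\|$ into a multiplicative comparison of $\alpha_{-\log\|\gamma(t)-x_{0}\|}$ and $\alpha_{-\log\|t-t_{0}\|}$. Since $-\log$ reverses order and the sharp topology is non-Archimedean, one must verify that the bounding generalized number $M$ remains bounded along the sharp-convergent net $t\to t_{0}$, and that the passage from $-\log$-inequalities to $\alpha$-inequalities respects the ring structure of $\overline{\mathbb{K}}$. Once this technical point is handled, assembling the estimates of $A(t)$ and $B(t)$ through the equivalence of Section~2 produces both the differentiability of $F$ at $t_{0}$ and the formula $F'(t_{0})=\langle\nabla f(\gamma(t_{0}))\mid\gamma'(t_{0})\rangle$.
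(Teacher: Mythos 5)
Your proposal is correct and follows exactly the route the paper intends: the paper gives no proof beyond displaying the two ingredients you use (the first--order expansion of $f$ with remainder $\phi(x)\,\alpha_{-\log\|x-x_0\|}$, and the max--inequality controlling $\|\gamma(t)-\gamma(t_0)\|/\|t-t_0\|$) and then invoking ``standard techniques''. Your decomposition into $A+B$ and the treatment of $A$ are fine. The one step you flag as the ``main obstacle'' is not actually an obstacle, and you should not try to compare $\alpha_{-\log\|\gamma(t)-x_0\|}$ with $\alpha_{-\log\|t-t_0\|}$ inside $\overline{\mathbb{K}}$ at all: just take sharp norms. Since $\|\alpha_r\|=e^{-r}$, one has $\|\alpha_{-\log s}\|=s$, division by the invertible element $\alpha_{-\log\|t-t_0\|}$ rescales the sharp norm by exactly $1/\|t-t_0\|$, and the sharp norm is submultiplicative, so
\[
\Bigl\|\tfrac{B(t)}{\alpha_{-\log\|t-t_0\|}}\Bigr\|\;\le\;\|\phi(\gamma(t))\|\cdot\frac{\|\gamma(t)-x_0\|}{\|t-t_0\|},
\]
where the second factor is bounded for $t$ near $t_0$ by the displayed max--inequality (note the bound is a real constant, e.g.\ $\|\gamma'(t_0)\|+1$, not an element of $\overline{\mathbb{R}}$ as you wrote) and the first factor tends to $0$. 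Convergence to $0$ in the sharp topology is exactly convergence of the sharp norm to $0$ --- this is also the content of the equivalence recalled at the beginning of Section~2 --- so $B(t)/\alpha_{-\log\|t-t_0\|}\to 0$ and your argument closes.
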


A history $\gamma=(\gamma_{\varepsilon})$ is closed if $\gamma_{\varepsilon}$
is a closed curve for small $\varepsilon$ and $\gamma$ is simple if for
small $\varepsilon$ we have that $\gamma_{\varepsilon}$ is a simple curve.
In an obvious way we define positively and negatively oriented histories. We
say that $\gamma$ is contractible if $\gamma_{\varepsilon}$ is homotopic to $%
\,0\,$ for small $\varepsilon$.

If $\Omega \subset {\mathbb{R}}^2$ and $f=[(f_{\varepsilon})]\in ({\mathcal{G%
}}(\Omega ))^2$ then define the generalized function $rot(f):=[\,(\,x\in%
\Omega\mapsto rot\,f_{\varepsilon}(x)\,)\,].$ This is obviously a well
defined element of $\,({\mathcal{G}}(\Omega ))^3$. We can now state the 
\textit{Generalized Green Theorem}.

\vspace{0.3cm}

\begin{theorem}[Green's theorem]
\label{green}

Let $\Omega \subset {\mathbb{R}}^2$, $\lambda$ the Lebesgue measure on ${%
\mathbb{R}}^2$, $\,f\in ({\mathcal{G}}(\Omega ))^2$ and $\gamma=(\gamma_{%
\varepsilon})$ a closed, simple, contractible and positively oriented
history. If $\,M=[(\gamma_{\varepsilon}([0,1]))]\,$, then 
\begin{equation*}
\int_{\gamma}fd\gamma =\int_{M}\langle rot(f)\,|\,e_3\rangle\,d\lambda\,.
\end{equation*}
\end{theorem}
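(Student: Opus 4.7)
My plan is to reduce the claim to the classical Green theorem applied pointwise in $\varepsilon$ and then lift the resulting equality to $\overline{\mathbb{K}}$ by passing to equivalence classes. The hypotheses on the history have been arranged precisely so that this reduction is legitimate.

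First, I fix a representative $(f_{\varepsilon})$ of $f$ and choose $\eta\in\mathbf{I}$ small enough that, for every $\varepsilon\in\mathbf{I}_{\eta}$, the curve $\gamma_{\varepsilon}$ is a $\mathcal{C}^1$ simple closed positively oriented loop in $\mathbb{R}^2$. The Jordan curve theorem then yields a bounded region $D_{\varepsilon}$ having $\gamma_{\varepsilon}$ as boundary; contractibility of $\gamma_{\varepsilon}$ in $\Omega$, together with condition~1 of Definition~\ref{membrana}, forces $\overline{D_{\varepsilon}}$ to lie in a fixed compact subset $K\subset\Omega$. This $\overline{D_{\varepsilon}}$ is the two-dimensional set that plays the role of $M_{\varepsilon}$ in the statement.

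Next, since $f_{\varepsilon}\in\mathcal{C}^{\infty}(\Omega)$, the classical Green theorem applies on $\overline{D_{\varepsilon}}$ and gives
$$\int_0^1 \langle f_{\varepsilon}(\gamma_{\varepsilon}(t))\,|\,\gamma_{\varepsilon}'(t)\rangle\, dt \;=\; \int_{D_{\varepsilon}} \langle \mathrm{rot}(f_{\varepsilon})\,|\,e_3\rangle\, d\lambda$$
for every $\varepsilon\in\mathbf{I}_{\eta}$. By the very definitions given in the previous section, the left-hand family is a representative of $\int_{\gamma} f\, d\gamma$ and the right-hand family is a representative of $\int_M \langle \mathrm{rot}(f)\,|\,e_3\rangle\, d\lambda$. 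Passing to equivalence classes yields the asserted identity in $\overline{\mathbb{K}}$.

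The step requiring real care is the verification that the resulting element of $\overline{\mathbb{K}}$ is independent of the chosen representatives, both the representative $(f_{\varepsilon})$ of $f$ and the pre-membrane representative of $M$. For the area integral this is exactly Proposition~\ref{welldef}; for the line integral a parallel argument works, exploiting the moderate bound $|\gamma_{\varepsilon}'(t)|\leq c\varepsilon^{-N}$ from Definition~\ref{history} together with the mean value theorem to absorb the null correction terms coming from an equivalent history. I expect the main conceptual obstacle to be precisely the identification of $M_{\varepsilon}$ with the enclosed region rather than the image curve, together with the use of contractibility to keep that region inside $\Omega$; once this is in hand, the classical Green theorem supplies the pointwise identity and the Colombeau machinery does the rest.
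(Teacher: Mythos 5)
Your proof is correct and follows exactly the route the paper intends: the paper in fact gives no proof of Theorem~\ref{green} at all, merely asserting that such results translate from the classical case by the $\varepsilon$-wise reduction you carry out, with well-definedness supplied by Proposition~\ref{welldef} and the (already asserted) consistency of the line integral. You were also right to read $M_{\varepsilon}$ as the closed region enclosed by $\gamma_{\varepsilon}$ rather than the image $\gamma_{\varepsilon}([0,1])$ appearing literally in the statement --- the latter has planar Lebesgue measure zero, so the right-hand side would vanish --- and your use of contractibility together with the pre-membrane condition to keep these regions inside a fixed compact subset of $\Omega$ is precisely what is needed for the classical Green theorem to apply uniformly in $\varepsilon$.
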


Most other theorems of classical differential calculus can now, in a very
natural way, be translated to this context. Since we have also the notion of
a generalized manifold these results should also be extended to generalized
manifolds.


\section{\textbf{The Generalized Cauchy Formula}}

In this section $\Omega\subset{\mathbb{C}}$ and ${\mathbb{K}} ={\mathbb{C}}$%
. Let $\gamma =(\gamma_{\varepsilon})$ be a closed, simple, contractible
history and $z_0=[(z_{0\varepsilon})]\in {\overline{{\mathbb{C}}}}$ such
that $\,z_{0\varepsilon}$ belong to the bounded connected component of $%
\,\Omega\setminus\gamma_{\varepsilon}([0,1])\,,\,\forall\,\varepsilon\in%
\mathbf{I}$. As $\gamma^*$ is a pre-membrane we can define the generalized
number 
\begin{equation*}
d(z_0,\gamma^*):=[\varepsilon\mapsto
d(z_{0\varepsilon},\gamma_{\varepsilon}([0,1]))]
\end{equation*}
where $d(z_{0\varepsilon},\gamma_{\varepsilon}([0,1]))$ is the distance of $%
z_{0\varepsilon}$ to the set $\gamma_{\varepsilon}([0,1])\,.$ Note that, if $%
d(z_0,\gamma)\in Inv({\overline{{\mathbb{K}}}})$, then $z-z_0\in Inv({%
\overline{{\mathbb{C}}}})$ for all $z\in [\gamma^*]\,$.

Let $\Omega$ be a simply connected domain and $\,f\in\mathcal{HG}(\Omega)$.
In \cite{afj} we proved that any such $\ f$ has a convergent Taylor series.
In \cite{op} is proved that $\ f$ has a representative $({f}_{\varepsilon})$
such that ${f}_{\varepsilon}$ is holomorphic for all $\varepsilon\in\mathbf{I%
}$. Using this and the classical Cauchy Theorem we get the \textit{G-Cauchy
Formula}.

\begin{theorem}[G-Cauchy Formula]
\label{cauchy}  Let $\Omega$ be a simply connected set, $\gamma$ be a
closed, simple, contractible, positively oriented history and $%
z_0=[(z_{0\varepsilon})]\in {\overline{{\mathbb{C}}}}$ such that $%
\,z_{0\varepsilon}$ belong to the bounded connected component of $%
\,\Omega\setminus\gamma_{\varepsilon}([0,1])\,,\,\forall\,\varepsilon\in 
\mathbf{I}\,$. If $d(z_0,\gamma^*)\in Inv({\overline{{\mathbb{C}}}})$ and $%
\,f=[(f_{\varepsilon})]\in\mathcal{HG}(\Omega)$ , then 
\begin{equation*}
\,\,(\kappa (f))(z_0)=\frac{1}{2\pi i}\int_{\gamma}\frac{f(z)}{z-z_0}%
dz:=[\varepsilon\mapsto\frac{1}{2\pi i}\int_{\gamma_{\varepsilon}}{\frac{%
f_{\varepsilon}(z)}{{z-z_{0\varepsilon}}}}\,dz\,]\,.
\end{equation*}
\end{theorem}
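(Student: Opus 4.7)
The plan is to reduce the identity to the classical Cauchy integral formula applied separately for each small $\varepsilon$, and then pass to classes in $\overline{\mathbb{C}}$. First I would invoke the result of \cite{op} to select a representative $(f_\varepsilon)$ of $f$ such that $f_\varepsilon$ is holomorphic on $\Omega$ for every $\varepsilon\in\mathbf{I}$. Since $\gamma^{*}$ is a pre-membrane, there are a compact $K\subset\Omega$ and $\eta\in\mathbf{I}$ with $\gamma_\varepsilon([0,1])\subset K$ for $\varepsilon\in\mathbf{I}_\eta$; the hypotheses that $\gamma$ is closed, simple, positively oriented and contractible translate, by the very definitions in the paper, into the classical properties of $\gamma_\varepsilon$ for all sufficiently small $\varepsilon$. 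With $z_{0\varepsilon}$ in the bounded component of $\Omega\setminus\gamma_\varepsilon([0,1])$ and $\Omega$ simply connected, the classical Cauchy formula yields
\begin{equation*}
f_\varepsilon(z_{0\varepsilon}) \;=\; \frac{1}{2\pi i}\int_{\gamma_\varepsilon}\frac{f_\varepsilon(z)}{z-z_{0\varepsilon}}\,dz
\end{equation*}
for all $\varepsilon$ in some $\mathbf{I}_{\eta_0}$.

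Next I would verify that the right-hand side defines a bona fide generalized number, i.e.\ that $\varepsilon\mapsto\int_{\gamma_\varepsilon}f_\varepsilon(z)/(z-z_{0\varepsilon})\,dz$ is moderate. The assumption $d(z_0,\gamma^{*})\in \mathrm{Inv}(\overline{\mathbb{C}})$ furnishes $m\in\mathbb{N}$ such that $|z-z_{0\varepsilon}|\geq\varepsilon^{m}$ for every $z\in\gamma_\varepsilon([0,1])$ and every small $\varepsilon$. Combined with the moderate bound for $f_\varepsilon$ on the compact $K$ and the estimate $|\gamma_\varepsilon'(t)|\leq c\,\varepsilon^{-N}$ coming from the definition of history, the parametrized integrand is bounded by $c'\,\varepsilon^{-N'}$, so the integral is $O(\varepsilon^{-N'})$ and is therefore moderate. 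Independence from the choices of representatives of $f$, $z_0$ and $\gamma$ is routine and can be handled exactly as in Proposition \ref{welldef}, combining the same moderate estimates with the null estimates arising when representatives are perturbed.

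Passing to equivalence classes in the $\varepsilon$-wise identity above then gives
\begin{equation*}
(\kappa(f))(z_0) \;=\; [\varepsilon\mapsto f_\varepsilon(z_{0\varepsilon})] \;=\; \left[\varepsilon\mapsto\frac{1}{2\pi i}\int_{\gamma_\varepsilon}\frac{f_\varepsilon(z)}{z-z_{0\varepsilon}}\,dz\right],
\end{equation*}
which is the desired formula. The only real difficulty is the moderateness check in the second step, where the invertibility of $d(z_0,\gamma^{*})$ must be balanced against the possible blow-up of $f_\varepsilon$, of $\gamma_\varepsilon'$ and of $1/(z-z_{0\varepsilon})$; this is precisely what that hypothesis is there for. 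All remaining steps are inherited from the classical theory via the holomorphic representative supplied by \cite{op}.
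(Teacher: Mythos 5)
Your proposal is correct and follows essentially the same route as the paper, which derives the theorem by taking the globally holomorphic representative from \cite{op}, applying the classical Cauchy formula for each small $\varepsilon$, and passing to classes. In fact you supply more detail than the paper does (the moderateness estimate combining $d(z_0,\gamma^*)\in Inv(\overline{\mathbb{C}})$ with the moderate bounds on $f_\varepsilon$ and $\gamma_\varepsilon'$), since the paper only sketches this argument in the paragraph preceding the statement.
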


Now we shall prove the Goursat Theorem in this context. In \cite{afj} this
theorem was proved with the condition that $f$ is sub-linear.

\begin{theorem}[Goursat Theorem]
\label{goursat} Let $\,f\in\mathcal{HG}(\Omega)$ . Then $\,\kappa (f)$ is
analytic in ${\tilde{\Omega}}_c$.
\end{theorem}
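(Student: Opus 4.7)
The plan is to derive analyticity at an arbitrary point $a = [(a_\varepsilon)] \in \tilde{\Omega}_c$ by mimicking the classical Cauchy-formula argument inside the sharp topology. First I would fix a holomorphic representative $(f_\varepsilon)$ of $f$ (available by the result from \cite{op} cited just above) and, exploiting the fact that $a$ lives over a compact $K \subset \Omega$ with positive distance $\delta$ to $\partial\Omega$, introduce the concrete circular history
$$\gamma_\varepsilon(t) := a_\varepsilon + \varepsilon^s e^{2\pi i t}, \qquad t \in [0,1],$$
for some fixed $s > 0$. For small $\varepsilon$, the curve $\gamma_\varepsilon$ is a closed, simple, contractible, positively oriented curve lying in $\Omega$ with $a_\varepsilon$ in its bounded component, its derivative $2\pi i\varepsilon^s e^{2\pi i t}$ is moderate, and its image represents the generalized circle of radius $\alpha_s$ around $a$, so that $d(a,\gamma^*) = \alpha_s \in Inv(\overline{\mathbb{C}})$ and Theorem~\ref{cauchy} applies.

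For $w$ in the generalized ball $V_{s'}[a]$ with $s' > s$, the G-Cauchy Formula gives
$$\kappa(f)(w) = \frac{1}{2\pi i}\int_\gamma \frac{f(z)}{z-w}\,dz,$$
and on $\gamma^*$ one has $|z-a| = \alpha_s$ while $|w - a| \leq \alpha_{s'}$. Hence the ratio $|w-a|/|z-a|$ is bounded by $\alpha_{s'-s}$, a sharp infinitesimal. This invites the geometric expansion
$$\frac{1}{z-w} = \sum_{n=0}^\infty \frac{(w-a)^n}{(z-a)^{n+1}},$$
from which I would formally deduce the Taylor series
$$\kappa(f)(w) = \sum_{n=0}^\infty c_n(w-a)^n, \qquad c_n := \frac{1}{2\pi i}\int_\gamma \frac{f(z)}{(z-a)^{n+1}}\,dz.$$

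The main obstacle will be justifying the exchange of integral and infinite sum, and controlling convergence in the sharp topology. The strategy is a direct ML-estimate: moderateness of $(f_\varepsilon)$ on a compact neighbourhood of $K$ furnishes $N \in \mathbb{N}$ and $c > 0$ with $|f_\varepsilon(z)| \leq c\,\varepsilon^{-N}$ along $\gamma_\varepsilon$, and combined with arclength $O(\varepsilon^s)$ and $|z - a_\varepsilon|^{-(n+1)} = \varepsilon^{-s(n+1)}$ this yields a representative of $c_n$ bounded by $c'\varepsilon^{-N - sn}$. Thus the $n$-th term $c_n(w-a)^n$ has sharp norm at most $\alpha_{(s'-s)n - N}$, which decays to zero exponentially in $n$. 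The remainder after $N$ terms is controlled through the closed-form geometric tail $(w-a)^N(z-a)^{-N}(z-w)^{-1}$; feeding this into the same moderateness bounds produces a remainder whose sharp norm vanishes as $N \to \infty$. This establishes the convergent Taylor representation of $\kappa(f)$ at the arbitrary point $a$, and since $a \in \tilde{\Omega}_c$ was arbitrary, $\kappa(f)$ is analytic throughout $\tilde{\Omega}_c$.
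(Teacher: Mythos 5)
Your proposal is correct and follows essentially the same route as the paper: holomorphic representatives from \cite{op}, the $\varepsilon$-wise Cauchy formula on a circular history around the point, geometric expansion of the kernel, and moderateness estimates to get convergence of the resulting power series in the sharp topology. The only difference is cosmetic: you take a circle of infinitesimal radius $\varepsilon^{s}$ (so your coefficients grow like $\varepsilon^{-N-sn}$ and are compensated by $|w-a|^{n}\leq\varepsilon^{s'n}$ with $s'>s$), whereas the paper uses a fixed classical radius $r$ and points $z\in V_{\rho}[z_{0}]$ with $\rho<r/4$; both choices yield a sharp neighbourhood on which the Taylor series converges.
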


\begin{proof}
Let $z_0=[(z_{0\varepsilon})]\in{\tilde{\Omega}}_c$ and $f=[(f_{%
\varepsilon})]$. Then there are $\eta_1\in\mathbf{I}$ and $\,K\subset\Omega$
compact sets such that $z_{0\varepsilon}\in K\,,\,\forall\varepsilon\in%
\mathbf{I}_{\eta_1}\,$. From this we can choose $R>0$ such that $%
B_R(z_{0\varepsilon})\subset\Omega\,,\,\forall\,\varepsilon\in\mathbf{I}%
_{\eta_1}$. Let $\,\rho<r/4<R/2$ and $\,\eta_2\in\mathbf{I}_{\eta_1}$ such
that $\varepsilon^{\rho}<\rho\,,\,\forall\,\varepsilon\in\mathbf{I}%
_{\eta_2}\,$. Define $\,\gamma_{\varepsilon}(t):=z_{0\varepsilon}+re^{2\pi
it}\,,\,\forall\,t\in\,[0,1]\,,\,\forall\,\varepsilon\in\mathbf{I}_{\eta_2}$
and $\,\gamma_{\varepsilon}:=\gamma_{\eta_2/2}\,,\,\forall\,\eta_2\leq%
\varepsilon\leq 1\,$. So $\gamma=(\gamma_{\varepsilon})$ is a closed,
simple, contractible and positively oriented history.

Let $\,z=[(z_{\varepsilon})]\in V_{\rho}[z_0]\,$. Then there is $\eta\in
I_{\eta_2}$ such that $\,\forall\,t\in\,[0,1]$ one has $\varepsilon^{%
\rho}-|z_{\varepsilon}-z_{0\varepsilon}|\geq-\varepsilon^{\rho}\,$ and $%
|z_{\varepsilon}-\gamma_{\varepsilon}(t)|=|z_{\varepsilon}-z_{0%
\varepsilon}-re^{2\pi i t}|\geq r-|z_{\varepsilon}-z_{0\varepsilon}| \geq
r-2\varepsilon^{\rho}\geq r-2\rho>2\rho\,.$ Thus $\,d(z,\gamma^*)\in Inv({%
\mathbb{C}})$. Fix $\varepsilon\in\mathbf{I}_{\eta}\,$, $\,w\in\gamma_{%
\varepsilon}([0,1])$ and note that

\begin{equation*}
{\frac{f_{\varepsilon}(w)}{{w-z_{\varepsilon}}}}={\frac{f_{\varepsilon}(w)}{{%
w-z_{0\varepsilon}}}}\sum_{n=0}^{\infty}\left ({\frac{z_{\varepsilon}-z_{0%
\varepsilon}}{{w-z_{0\varepsilon}}}}\right )^n=\sum_{n=0}^{\infty}
f_{\varepsilon}(w){\frac{(z_{\varepsilon}-z_{0\varepsilon})^n\,\,\,}{{%
(w-z_{0\varepsilon})^{n+1}}}}\,,
\end{equation*}
and so 
\begin{equation*}
2\pi i f_{\varepsilon}(z_{\varepsilon})=\int_{\gamma_{\varepsilon}}{\frac{%
f_{\varepsilon}(w)}{{w-z_{\varepsilon}}}}\,dw=\sum_{n=0}^{\infty}
\int_{\gamma_{\varepsilon}}f_{\varepsilon}(w){\frac{(z_{\varepsilon}-z_{0%
\varepsilon})^n\,\,\,}{{(w-z_{0\varepsilon})^{n+1}}}}\,dw\,.
\end{equation*}
Thus

{\small 
\begin{eqnarray*}
2\pi i(\kappa (f))(z)=\int_{\gamma}{\frac{f(w)}{{w-z}}}\,dw&=&\left [\left
(\,\int_{\gamma_{\varepsilon}}{\frac{f_{\varepsilon}(w)}{{w-z_{\varepsilon}}}%
}\,dw\,\right )\right ]\cr\cr\cr &=&\left [\left (\,\sum_{n=0}^{\infty}
\int_{\gamma_{\varepsilon}}f_{\varepsilon}(w){\frac{(z_{\varepsilon}-z_{0%
\varepsilon})^n\,\,\,}{{(w-z_{0\varepsilon})^{n+1}}}}\,dw\,\right )\right
]\,.\cr
\end{eqnarray*}%
}

\vskip-0.3cm \noindent Using that $\,||z-z_0||<1\,$, $\gamma^*$ is a
pre-membrane and $(f_{\varepsilon})$ is moderate it is not difficult to
prove that $\sum_{n=0}^{\infty} \int_{\gamma}f(w){\frac{(z-z_0)^n\,\,\,}{{%
(w-z_{0})^{n+1}}}}\,dw\,$ converges (since that $\lim\limits_{n\to
\infty}\int_{\gamma}f(w){\frac{(z-z_0)^n\,\,\,}{{(w-z_0)^{n+1}}}}\,dw=0\,$)
and that {\small 
\begin{equation*}
\sum_{n=0}^{\infty} \int_{\gamma}f(w){\frac{(z-z_0)^n\,\,\,}{{(w-z_{0})^{n+1}%
}}}\,dw\,=\left [\left ( \sum_{n=0}^{\infty}
\int_{\gamma_{\varepsilon}}f_{\varepsilon}(w){\frac{(z_{\varepsilon}-z_{0%
\varepsilon})^n\,\,\,}{{(w-z_{0\varepsilon})^{n+1}}}}\,dw\,\right)\right ]\,.
\end{equation*}%
} Hence $\,2\pi i(\kappa (f))(z)=\sum_{n=0}^{\infty} \int_{\gamma}{\frac{%
\,f(w)\,\,\,}{{\,(w-z_{0})^{n+1}}}}\,dw\,(z-z_0)^n\,,\,\forall\,z=[(z_{%
\varepsilon})]\in V_{\rho}[z_0]\,$.
\end{proof}


\section{\textbf{\ The Transport Equation}}

In this section we shall consider the transport equation with generalized
coefficients. We prove that we have now all the tools to give a classical
solution of this problem.

Let $\Omega={\mathbb{R}}^n\times\,]0,\infty[\,$ and $u:{\tilde{\Omega}}%
_c\rightarrow{\overline{{\mathbb{R}}}}$ be a differentiable function. Denote
by $u_t$ the partial derivative of $u$ in the last variable and let  $\nabla
u=(\nabla_xu,u_t)$, where $\nabla_xu$ is the gradient vector of $u$ with
respect to the first $n$ variables. For $b\in {\overline{{\mathbb{R}}}}^n$
and $g\in\mathcal{C}^{1}(\widetilde{{\mathbb{R}}^n}_c,{\overline{{\mathbb{R}}%
}})\,$ we consider the transport equation

\vskip-0.2cm  
\begin{equation}  \label{transp}
\ u_t+\langle \, \nabla_xu\,|\,b\,\rangle=0\,\,\hbox {in } \,{\tilde{\Omega}}%
_c\,\,\,,\,\,\,u(x,0)=g(x)\,,\,\forall\,x\in\widetilde{{\mathbb{R}}^n}_c\,.
\end{equation}

Let $\,w(x,t):=g(x-tb)\,$, $\,\forall\,(x,t)\in {\tilde{\Omega}}_c\,$. By
Theorem~\ref{curves}, we have that $\,w\,$ is a solution of (1). Moreover,
if $g\in\kappa(\mathcal{G}({\mathbb{R}}^n))$ and $\,v\in\mathcal{G}(\Omega)$
is a solution of (1) in $\mathcal{G}(\Omega)$, then $\,\kappa v=w\,$.

Just like in the classical case, we can get an explicit solution of the
boundary value problem

\vskip-0.4cm 
\begin{equation}  \label{transp1}
\ u_t+\langle \,\nabla_xu\,|\,b\,\rangle=\kappa (f)\,\,\hbox {in } \,{\tilde{%
\Omega}}_c\,\,\,,\,\,\,u(x,0)=g(x)\,,\,\forall\,x\in\widetilde{{\mathbb{R}}^n%
}_c\,,
\end{equation}

\noindent where $\,f\in\mathcal{G}({\mathbb{R}}^n\times\,]-a,\infty[)$ for
some $\,a>0\,$. In this case, let \vskip-0.2cm 
\begin{equation*}
\,w(x,t):=
g(x-tb)+\int_{M_{t}}\,f(x+sb,t+s)\,d\lambda(s)\,,\,\forall\,(x,t)\in{\tilde{%
\Omega}}_c\,,
\end{equation*}
where $\,\lambda$ is the Lebesgue measure on ${\mathbb{R}}\,$ and if $%
\,t=[(t_{\varepsilon})]$, then $\,M_{t}=[\,(\,[-t_{\varepsilon},0]\,)\,]\,$.
So $w$ is a solution of (2). Besides, if $g\in\kappa(\mathcal{G}({\mathbb{R}}%
^n))$ and $\,v\in\mathcal{G}(\Omega)$ is a solution of (2) in $\mathcal{G}%
(\Omega)$, then $\,\kappa v=w\,$.

\vspace{0.3cm} This proves that the differential calculus we developed
allows us to solve the generalized transport equation just like in the
classical case. Using the solution of this equation, we can solve, for $%
\,n=1\,$ and $\,g\,,\,h\in\mathcal{G}({\mathbb{R}}^n)\,$, the boundary value
problem

\vskip-0.6cm 
\begin{equation*}
u_{tt}- u_{xx}=0\,\,\hbox{ in }\,\, {\tilde{\Omega}}_c\,\,\,,\,\, u(x,0)=
(\kappa g)(x)\,\,\hbox {and }\,\, u_t(x,0)= (\kappa h)(x)
\,\,,\,\forall\,x\in\widetilde{{\mathbb{R}}^n}_c\,,
\end{equation*}
giving a formula for its solution just as is done in the classical case. In
this case a solution is the function 
\begin{equation*}
w(x,t)={\frac{1}{{2}}}[\,g(x+t)+g(x-t)\,] +{\frac{1}{{2}}}%
\int_{M_{x\,t}}\,h\,d\lambda\,,\,\forall\, (x,t)\in{\tilde{\Omega}}_c\,,
\end{equation*}
where $\,\lambda$ is the Lebesgue measure on ${\mathbb{R}}\,$ and if $%
x=[(x_{\varepsilon})]\,$ and $\,t=[(t_{\varepsilon})]$, then $%
\,M_{x\,t}=[\,(\,[x_{\varepsilon}-t_{\varepsilon},x_{\varepsilon}+t_{%
\varepsilon}]\,)\,]\,$. \vspace{0.6cm}

\noindent {\underline{\textbf{Acknowledgment:}} Part of this work was done
when the last author was visiting the University of S\~ao Paulo in August
2006. His visit was partially supported by FAPESP-Brazil. One should also
notice that in \cite{Eve} there is also a version of the implicit function
theorem and that in \cite{ever} membrane are treated in a more general
sense. }

\noindent J. Aragona, R. Fernandez and S.O. Juriaans\newline
Instituto de Matem\'atica e Estat\'\i stica - Universidade de S\~ao Paulo%
\newline
CP 66281 - CEP 05311-970 - S\~ao Paulo - Brazil\newline
E-mail addresses: aragona@ime.usp.br;\,\, roselif@ime.usp.br;\newline
ostanley@ime.usp.br

\vspace{0.6cm}

\noindent M. Oberguggenberger\newline
Institut f$\ddot{u}$r Technische Mathematik, Geometrie und Bauinformatik,
Universit$\ddot{a}$t Inssbruck, A - 6020 Inssbruck, Austria\newline
E-mail address: michael@mat1.uibk.ac.at

\end{document}